\numberwithin{equation}{section}
\begin{document}
\title{The boundedness of intrinsic square functions on the weighted Herz spaces}
\author{Hua Wang \footnote{E-mail address: wanghua@pku.edu.cn.}\\
\footnotesize{Department of Mathematics, Zhejiang University, Hangzhou 310027, China}}
\date{}
\maketitle

\begin{abstract}
In this paper, we will obtain the strong type and weak type estimates of intrinsic square functions including the Lusin area integral, Littlewood-Paley $g$-function and $g^*_\lambda$-function on the weighted Herz spaces $\dot K^{\alpha,p}_q(w_1,w_2)$ ($K^{\alpha,p}_q(w_1,w_2)$) with general weights. \\
MSC(2010): 42B25; 42B35 \\
Keywords: Intrinsic square functions; weighted Herz spaces; weighted weak Herz spaces; $A_p$ weights
\end{abstract}

\section{Introduction and main results}

Let ${\mathbb R}^{n+1}_+=\mathbb R^n\times(0,\infty)$ and $\varphi_t(x)=t^{-n}\varphi(x/t)$. The classical square function (Lusin area integral) is a familiar object. If $u(x,t)=P_t*f(x)$ is the Poisson integral of $f$, where $P_t(x)=c_n\frac{t}{(t^2+|x|^2)^{{(n+1)}/2}}$ denotes the Poisson kernel in ${\mathbb R}^{n+1}_+$. Then we define the classical square function (Lusin area integral) $S(f)$ by (see \cite{gundy} and \cite{stein})
\begin{equation*}
S(f)(x)=\bigg(\iint_{\Gamma(x)}\big|\nabla u(y,t)\big|^2t^{1-n}\,dydt\bigg)^{1/2},
\end{equation*}
where $\Gamma(x)$ denotes the usual cone of aperture one:
\begin{equation*}
\Gamma(x)=\big\{(y,t)\in{\mathbb R}^{n+1}_+:|x-y|<t\big\}
\end{equation*}
and
\begin{equation*}
\big|\nabla u(y,t)\big|^2=\left|\frac{\partial u}{\partial t}\right|^2+\sum_{j=1}^n\left|\frac{\partial u}{\partial y_j}\right|^2.
\end{equation*}
Similarly, we can define a cone of aperture $\gamma$ for any $\gamma>0$:
\begin{equation*}
\Gamma_\gamma(x)=\big\{(y,t)\in{\mathbb R}^{n+1}_+:|x-y|<\gamma t\big\},
\end{equation*}
and corresponding square function
\begin{equation*}
S_\gamma(f)(x)=\bigg(\iint_{\Gamma_\gamma(x)}\big|\nabla u(y,t)\big|^2t^{1-n}\,dydt\bigg)^{1/2}.
\end{equation*}
The Littlewood-Paley $g$-function (could be viewed as a ``zero-aperture" version of $S(f)$) and the $g^*_\lambda$-function (could be viewed as an ``infinite aperture" version of $S(f)$) are defined respectively by
(see, for example, \cite{muckenhoupt2} and \cite{stein2})
\begin{equation*}
g(f)(x)=\bigg(\int_0^\infty\big|\nabla u(x,t)\big|^2 t\,dt\bigg)^{1/2}
\end{equation*}
and
\begin{equation*}
g^*_\lambda(f)(x)=\left(\iint_{{\mathbb R}^{n+1}_+}\bigg(\frac t{t+|x-y|}\bigg)^{\lambda n}\big|\nabla u(y,t)\big|^2 t^{1-n}\,dydt\right)^{1/2}, \quad \lambda>1.
\end{equation*}

The modern (real-variable) variant of $S_\gamma(f)$ can be defined in the following way (here we drop the subscript $\gamma$ if $\gamma=1$). Let $\psi\in C^\infty(\mathbb R^n)$ be real, radial, have support contained in $\{x:|x|\le1\}$, and $\int_{\mathbb R^n}\psi(x)\,dx=0$. The continuous square function $S_{\psi,\gamma}(f)$ is defined by (see, for example, \cite{chang} and \cite{chanillo})
\begin{equation*}
S_{\psi,\gamma}(f)(x)=\bigg(\iint_{\Gamma_\gamma(x)}\big|f*\psi_t(y)\big|^2\frac{dydt}{t^{n+1}}\bigg)^{1/2}.
\end{equation*}

In 2007, Wilson \cite{wilson1} introduced a new square function called intrinsic square function which is universal in a sense (see also \cite{wilson2}). This function is independent of any particular kernel $\psi$, and it dominates pointwise all the above-defined square functions. On the other hand, it is not essentially larger than any particular $S_{\psi,\gamma}(f)$. For $0<\beta\le1$, let ${\mathcal C}_\beta$ be the family of functions $\varphi$ defined on $\mathbb R^n$ such that $\varphi$ has support containing in $\{x\in\mathbb R^n: |x|\le1\}$, $\int_{\mathbb R^n}\varphi(x)\,dx=0$, and for all $x, x'\in \mathbb R^n$,
\begin{equation*}
|\varphi(x)-\varphi(x')|\le|x-x'|^\beta.
\end{equation*}
For $(y,t)\in {\mathbb R}^{n+1}_+$ and $f\in L^1_{{loc}}(\mathbb R^n)$, we set
\begin{equation}
A_\beta(f)(y,t)=\sup_{\varphi\in{\mathcal C}_\beta}\big|f*\varphi_t(y)\big|=\sup_{\varphi\in{\mathcal C}_\beta}\bigg|\int_{\mathbb R^n}\varphi_t(y-z)f(z)\,dz\bigg|.
\end{equation}
Then we define the intrinsic square function of $f$ (of order $\beta$) by the formula
\begin{equation}
\mathcal S_\beta(f)(x)=\left(\iint_{\Gamma(x)}\Big(A_\beta(f)(y,t)\Big)^2\frac{dydt}{t^{n+1}}\right)^{1/2}.
\end{equation}
We can also define varying-aperture versions of $\mathcal S_\beta(f)$ by the formula
\begin{equation}
\mathcal S_{\beta,\gamma}(f)(x)=\left(\iint_{\Gamma_\gamma(x)}\Big(A_\beta(f)(y,t)\Big)^2\frac{dydt}{t^{n+1}}\right)^{1/2}.
\end{equation}
The intrinsic Littlewood-Paley $\mathcal G$-function and the intrinsic $\mathcal G^*_\lambda$-function will be given respectively by
\begin{equation}
\mathcal G_\beta(f)(x)=\left(\int_0^\infty\Big(A_\beta(f)(x,t)\Big)^2\frac{dt}{t}\right)^{1/2}
\end{equation}
and
\begin{equation}
\mathcal G^*_{\lambda,\beta}(f)(x)=\left(\iint_{{\mathbb R}^{n+1}_+}\left(\frac t{t+|x-y|}\right)^{\lambda n}\Big(A_\beta(f)(y,t)\Big)^2\frac{dydt}{t^{n+1}}\right)^{1/2}, \lambda>1.
\end{equation}

In \cite{wilson2}, Wilson showed the following weighted $L^p$ boundedness of the intrinsic square functions.

\newtheorem*{thma}{Theorem A}

\begin{thma}
Let $0<\beta\le1$, $1<p<\infty$ and $w\in A_p (\mbox{Muckenhoupt weight class})$. Then there exists a constant $C>0$ independent of $f$ such that
\begin{equation*}
\|\mathcal S_\beta(f)\|_{L^p_w}\le C \|f\|_{L^p_w}.
\end{equation*}
\end{thma}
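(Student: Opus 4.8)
My plan is to linearize the supremum that defines $A_\beta$ and to realize $\mathcal S_\beta$ as the pointwise norm of a single \emph{linear}, Banach-space-valued Calderón--Zygmund operator; once that is done, the passage from an unweighted $L^2$ bound to the full weighted $L^p$ range is automatic. Concretely, I would fix the Banach space $\mathbb B=L^2\big(\Gamma_*,\tfrac{du\,dt}{t^{n+1}};L^\infty(\mathcal C_\beta)\big)$, where $\Gamma_*=\{(u,t):|u|<t\}$, with norm $\|G\|_{\mathbb B}=\big(\iint_{\Gamma_*}(\sup_{\varphi\in\mathcal C_\beta}|G(\varphi,u,t)|)^2\frac{du\,dt}{t^{n+1}}\big)^{1/2}$, and define the linear operator $Tf(x)(\varphi,u,t)=\int_{\mathbb R^n}\varphi_t(x+u-z)f(z)\,dz$. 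The translation $y=x+u$ gives $\|Tf(x)\|_{\mathbb B}=\mathcal S_\beta(f)(x)$, so the whole problem reduces to the weighted $L^p$ boundedness of the single operator $T$, whose $\mathbb B$-valued convolution kernel is $\mathcal K(x,z)=\big[(\varphi,u,t)\mapsto\varphi_t(x+u-z)\big]$. The key gain is that the nonlinear supremum has been absorbed into the norm of $\mathbb B$, leaving a genuinely linear object.

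Next I would verify the two Calderón--Zygmund kernel estimates in the $\mathbb B$-norm. For the size estimate one uses the uniform bound $\sup_{\varphi\in\mathcal C_\beta}\|\varphi\|_\infty\le1$ (a consequence of the support and Hölder conditions) together with the support constraint $|x+u-z|\le t$, which forces $t\gtrsim|x-z|$; integrating $t^{-2n}$ over the admissible region yields $\|\mathcal K(x,z)\|_{\mathbb B}\lesssim|x-z|^{-n}$. For the smoothness (Hörmander) estimate I would apply the defining inequality $|\varphi(a)-\varphi(b)|\le|a-b|^\beta$ to $\mathcal K(x,z)-\mathcal K(x',z)$, which produces a factor $t^{-n}(|x-x'|/t)^\beta$ on a region with $t\gtrsim|x-z|$; integration then gives $\|\mathcal K(x,z)-\mathcal K(x',z)\|_{\mathbb B}\lesssim|x-x'|^\beta|x-z|^{-n-\beta}$ whenever $|x-x'|\le\tfrac12|x-z|$. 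Thus $\mathcal K$ is a standard CZ kernel with regularity exponent $\delta=\beta$; this is precisely the step in which the order $\beta$ of the intrinsic square function is used.

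The analytic heart is the unweighted base estimate $\|\mathcal S_\beta(f)\|_{L^2}\le C\|f\|_{L^2}$, equivalently $\|T\|_{L^2\to L^2_{\mathbb B}}<\infty$. By Fubini this reduces to $\iint_{\mathbb R^{n+1}_+}(A_\beta f(y,t))^2\frac{dy\,dt}{t}\le C\|f\|_2^2$. The input is the uniform Fourier decay $\sup_{\varphi\in\mathcal C_\beta}|\widehat\varphi(\xi)|\le C\min\{|\xi|,|\xi|^{-\beta}\}$, obtained from $\int\varphi=0$ (for small $\xi$) and the $\beta$-Hölder regularity (for large $\xi$); it yields $\int_0^\infty|\widehat\varphi(t\xi)|^2\frac{dt}{t}\le C_\beta$ uniformly, hence a Plancherel bound for each fixed $\varphi$. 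The delicate point, which I regard as the main obstacle, is that the supremum over the infinite family $\mathcal C_\beta$ sits \emph{inside} the space--time integral, so one cannot merely interchange $\sup_\varphi$ with $\int$ and quote the per-$\varphi$ Plancherel estimate. I would resolve this using that $\mathcal C_\beta$ is uniformly bounded and equicontinuous, hence precompact in the sup-norm by Arzelà--Ascoli, reducing $A_\beta f$ to a countable supremum and dominating it by a fixed smooth square function via a maximal-function argument, to which the uniform Plancherel bound applies.

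Finally, with the $L^2\to L^2_{\mathbb B}$ bound and the size/Hörmander kernel estimates established, $T$ is a Banach-space-valued Calderón--Zygmund operator, so the standard weighted vector-valued CZ theory applies verbatim (only norms are used in the proof) and gives $\|Tf\|_{L^p_{\mathbb B}(w)}\le C\|f\|_{L^p_w}$ for every $1<p<\infty$ and $w\in A_p$. Taking $\mathbb B$-norms and recalling $\|Tf(x)\|_{\mathbb B}=\mathcal S_\beta(f)(x)$ yields $\|\mathcal S_\beta(f)\|_{L^p_w}\le C\|f\|_{L^p_w}$, which is the assertion of Theorem A.
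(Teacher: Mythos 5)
The paper does not actually prove Theorem A --- it is imported verbatim from Wilson's work \cite{wilson2} --- so the only meaningful comparison is with Wilson's argument. Your overall architecture is legitimate and is a standard route to the weighted bounds: the linearization of $\mathcal S_\beta$ as the $\mathbb B$-norm of a Banach-space-valued convolution operator is correct (the substitution $y=x+u$ does give $\|Tf(x)\|_{\mathbb B}=\mathcal S_\beta(f)(x)$), the size estimate $\|\mathcal K(x,z)\|_{\mathbb B}\lesssim|x-z|^{-n}$ follows from $\sup_{\varphi\in\mathcal C_\beta}\|\varphi\|_\infty\le1$ together with the constraint $t\gtrsim|x-z|$, the regularity estimate with exponent $\beta$ follows from the H\"older hypothesis exactly as you sketch, and once an $L^2\to L^2_{\mathbb B}$ bound is in hand, the vector-valued Calder\'on--Zygmund theory with $A_p$ weights (Garc\'ia-Cuerva--Rubio de Francia, Ch.~V) does deliver the full range $1<p<\infty$, $w\in A_p$, since only the $\mathbb B$-norm of the kernel enters. (The nonseparability of $L^\infty(\mathcal C_\beta)$ is a technicality, handled by proving bounds uniform over finite subfamilies and passing to the limit.)

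The genuine gap sits exactly where you flag ``the main obstacle,'' and your proposed resolution does not close it. Arzel\`a--Ascoli gives precompactness of $\mathcal C_\beta$ and hence reduces $A_\beta(f)(y,t)$ to a countable supremum; that settles measurability but says nothing about size --- a supremum over a compact family can still be far larger than any single member, and $\int_0^\infty\sup_\varphi|f*\varphi_t(y)|^2\frac{dt}{t}$ is not controlled by $\sup_\varphi\int_0^\infty|f*\varphi_t(y)|^2\frac{dt}{t}$, which is all the per-$\varphi$ Plancherel bound provides. The crude uniform domination $A_\beta(f)(y,t)\le C\,Mf(y)$ (which is what a ``maximal-function argument'' yields) is useless here because $\int_0^\infty\frac{dt}{t}$ diverges: the cancellation $\int\varphi=0$ must be exploited in a way that survives the supremum. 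The actual content of Wilson's theorem at this point is an almost-orthogonality / Calder\'on reproducing formula decomposition: one writes $\varphi_t=\int_0^\infty\psi_s*\eta_{s,t}\,\frac{ds}{s}$ against a single fixed nondegenerate $\psi$, with bounds on $\eta_{s,t}$ of order $\min\{(s/t),(t/s)^{\beta}\}$ that are \emph{uniform over} $\varphi\in\mathcal C_\beta$; this produces a pointwise bound of $A_\beta(f)(y,t)$ by a Schur-type average of $|f*\psi_s|$ over a slightly enlarged region, after which the supremum has disappeared and the $L^2$ bound (indeed the comparison of $\mathcal S_\beta$ with a fixed $S_{\psi,\gamma}$) follows. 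That decomposition is the heart of the proof, and it is entirely absent from your outline; as written, the argument does not establish even the unweighted $L^2$ case.
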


Moreover, in \cite{lerner}, Lerner obtained sharp $L^p_w$ norm inequalities for the intrinsic square functions in terms of the $A_p$ characteristic constant of $w$ for all $1<p<\infty$. For further discussions about the boundedness of intrinsic square functions on various function spaces, we refer the readers to \cite{huang,wang1,wang2,wang3,wang4}.

Before stating our main results, let us first recall some definitions about the weighted Herz and weak Herz spaces. For more information about these spaces, one can see \cite{komori,lu1,lu4,lu3,tang} and the references therein. Let $B_k=B(0,2^k)=\{x\in\mathbb R^n:|x|\le 2^k\}$ and $C_k=B_k\backslash B_{k-1}$ for any $k\in\mathbb Z$. Denote $\chi_k=\chi_{_{C_k}}$ for $k\in\mathbb Z$, $\widetilde\chi_k=\chi_k$ if $k\in\mathbb N$ and $\widetilde\chi_0=\chi_{_{B_0}}$, where $\chi_{_{E}}$ is the characteristic function of the set $E$. For any given weight function $w$ on $\mathbb R^n$ and $0<q<\infty$, we denote by $L^q_w(\mathbb R^n)$ the space of all functions $f$ satisfying
\begin{equation}
\|f\|_{L^q_w}=\left(\int_{\mathbb R^n}|f(x)|^qw(x)\,dx\right)^{1/q}<\infty.
\end{equation}

\newtheorem{definition}{Definition}[section]

\begin{definition}[\cite{lu1}]
Let $\alpha\in\mathbb R$, $0<p, q<\infty$ and $w_1$, $w_2$ be two weight functions on $\mathbb R^n$.

$(a)$ The homogeneous weighted Herz space $\dot K^{\alpha,p}_q(w_1,w_2)$ is defined by
\begin{equation*}
\dot K^{\alpha,p}_q(w_1,w_2)=\Big\{f\in L^q_{loc}(\mathbb R^n\backslash\{0\},w_2):\big\|f\big\|_{\dot K^{\alpha,p}_q(w_1,w_2)}<\infty\Big\},
\end{equation*}
where
\begin{equation}
\big\|f\big\|_{\dot K^{\alpha,p}_q(w_1,w_2)}=\left(\sum_{k\in\mathbb Z}\big[w_1(B_k)\big]^{{\alpha p}/n}\big\|f\chi_k\big\|_{L^q_{w_2}}^p\right)^{1/p}.
\end{equation}

$(b)$ The non-homogeneous weighted Herz space $K^{\alpha,p}_q(w_1,w_2)$ is defined by
\begin{equation*}
K^{\alpha,p}_q(w_1,w_2)=\Big\{f\in L^q_{loc}(\mathbb R^n,w_2):\big\|f\big\|_{K^{\alpha,p}_q(w_1,w_2)}<\infty\Big\},
\end{equation*}
where
\begin{equation}
\big\|f\big\|_{K^{\alpha,p}_q(w_1,w_2)}=\left(\sum_{k=0}^\infty\big[w_1(B_k)\big]^{{\alpha p}/n}\big\|f\widetilde{\chi}_k\big\|_{L^q_{w_2}}^p\right)^{1/p}.
\end{equation}
\end{definition}

For any $k\in\mathbb Z$, $\lambda>0$ and any measurable function $f$ on $\mathbb R^n$, we set $E_k(\lambda,f)=\{x\in C_k:|f(x)|>\lambda\}$. Let $\widetilde E_k(\lambda,f)=E_k(\lambda,f)$ for $k\in\mathbb N$ and $\widetilde E_0(\lambda,f)=\{x\in B(0,1):|f(x)|>\lambda\}$.

\begin{definition}[\cite{lu3}]
Let $\alpha\in\mathbb R$, $0<p, q<\infty$ and $w_1$, $w_2$ be two weight functions on $\mathbb R^n$.

$(c)$ A measurable function $f(x)$ on $\mathbb R^n$ is said to belong to the homogeneous weighted weak Herz space $W\dot K^{\alpha,p}_q(w_1,w_2)$ if
\begin{equation}
\big\|f\big\|_{W\dot K^{\alpha,p}_q(w_1,w_2)}=\sup_{\lambda>0}\lambda\left(\sum_{k\in\mathbb Z}\big[w_1(B_k)\big]^{{\alpha p}/n}\big[w_2(E_k(\lambda,f))\big]^{p/q}\right)^{1/p}<\infty.
\end{equation}

$(d)$ A measurable function $f(x)$ on $\mathbb R^n$ is said to belong to the non-homogeneous weighted weak Herz space $W K^{\alpha,p}_q(w_1,w_2)$ if
\begin{equation}
\big\|f\big\|_{W K^{\alpha,p}_q(w_1,w_2)}=\sup_{\lambda>0}\lambda\left(\sum_{k=0}^\infty\big[w_1(B_k)\big]^{{\alpha p}/n}\big[w_2(\widetilde E_k(\lambda,f))\big]^{p/q}\right)^{1/p}<\infty.
\end{equation}
\end{definition}

Obviously, if $\alpha=0$, then $\dot K^{0,q}_q(w_1,w_2)=K^{0,q}_q(w_1,w_2)=L^q_{w_2}(\mathbb R^n)$ for any $0<q<\infty$. We also have $W\dot K^{0,q}_q(w_1,w_2)=WK^{0,q}_q(w_1,w_2)=WL^q_{w_2}(\mathbb R^n)$ when $\alpha=0$ and $0<q<\infty$, where
\begin{equation}
\|f\|_{WL^q_w}=\sup_{\lambda>0}\lambda\cdot w\big(\big\{x\in\mathbb R^n:|f(x)|>\lambda\big\}\big)^{1/q}<\infty.
\end{equation}

Thus, weighted (weak) Herz spaces are generalizations of the weighted (weak) Lebesgue spaces.
The main purpose of this paper is to consider the boundedness of intrinsic square functions on weighted Herz spaces with $A_p$ weights. At the extreme case, we will also prove that these operators are bounded from the weighted Herz spaces to the weighted weak Herz spaces. Our main results in the paper are formulated as follows.

\newtheorem{theorem}{Theorem}[section]

\begin{theorem}
Let $0<\beta\le1$, $0<p<\infty$, $1<q<\infty$, $w_1\in A_{q_1}$ and $w_2\in A_{q_2}$. Then $\mathcal S_\beta$ is bounded on $\dot K^{\alpha,p}_q(w_1,w_2)$ $(K^{\alpha,p}_q(w_1,w_2))$ provided that $w_1$ and $w_2$ satisfy either of the following

$(i)$ $w_1=w_2$, $1\le q_1=q_2\le q$ and $-{nq_1}/q<\alpha q_1<n(1-{q_2}/q);$

$(ii)$ $w_1\neq w_2$, $1\le q_1<\infty$, $1\le q_2\le q$ and $0<\alpha q_1<n(1-{q_2}/q)$.
\end{theorem}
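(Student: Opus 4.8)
The plan is to run the standard annular argument for Herz spaces, using Theorem A as a black box on the diagonal and quantitative off-diagonal kernel estimates elsewhere. Write $f=\sum_{j\in\mathbb Z}f_j$ with $f_j=f\chi_j$. By the sublinearity of $\mathcal S_\beta$ we have $\mathcal S_\beta(f)\le\sum_{j}\mathcal S_\beta(f_j)$ pointwise, hence $\|\mathcal S_\beta(f)\chi_k\|_{L^q_{w_2}}\le\sum_j\|\mathcal S_\beta(f_j)\chi_k\|_{L^q_{w_2}}$, and I split this sum into the diagonal band $|j-k|\le1$, the inner range $j\le k-2$, and the outer range $j\ge k+2$. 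It then suffices to show that, after weighting by $w_1(B_k)^{\alpha/n}$, each of the three pieces defines a bounded operator on $\ell^p(\mathbb Z)$ acting on $a_j:=\|f_j\|_{L^q_{w_2}}$. For the diagonal band I simply invoke Theorem A: since $q_2\le q$ forces $w_2\in A_{q_2}\subseteq A_q$ with $1<q<\infty$, the theorem gives $\|\mathcal S_\beta(f_j)\chi_k\|_{L^q_{w_2}}\le\|\mathcal S_\beta(f_j)\|_{L^q_{w_2}}\le C\,a_j$, and the three surviving terms $j\in\{k-1,k,k+1\}$ are absorbed into $C\,w_1(B_k)^{\alpha p/n}a_k^p$ via the doubling property $w_1(B_{k\pm1})\sim w_1(B_k)$.

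The core is the off-diagonal analysis. Every $\varphi\in\mathcal C_\beta$ is bounded (from its H\"older condition and vanishing at $|x|=1$) and, after scaling, supported in $\{|u|\le t\}$, so $A_\beta(f_j)(y,t)\le C\,t^{-n}\|f_j\|_{L^1}$. If $j\le k-2$ and $x\in C_k$, the supports of $f_j$ and of $\varphi_t(y-\cdot)$ together with $(y,t)\in\Gamma(x)$ force $t\ge c\,2^{k}$ for some $c>0$; integrating the size bound over $\Gamma(x)\cap\{t\ge c2^k\}$ yields $\mathcal S_\beta(f_j)(x)\le C\,2^{-kn}\|f_j\|_{L^1}$ (the vanishing moment of $\varphi$ would improve this by a factor $2^{(j-k)\beta}$, but the size bound already suffices for the stated range). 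Symmetrically, if $j\ge k+2$ the same supports force $t\ge c\,2^{j}$ and give $\mathcal S_\beta(f_j)(x)\le C\,2^{-jn}\|f_j\|_{L^1}$. In both regimes I pass from the unweighted $L^1$ norm to $a_j$ by H\"older's inequality (using $q>1$) and the $A_q$ condition on $w_2$, namely $\|f_j\|_{L^1}\le C\,2^{jn}w_2(B_j)^{-1/q}a_j$. Multiplying by $w_2(C_k)^{1/q}\le w_2(B_k)^{1/q}$ produces, for the inner range, $\|\mathcal S_\beta(f_j)\chi_k\|_{L^q_{w_2}}\le C\,2^{(j-k)n}\big(w_2(B_k)/w_2(B_j)\big)^{1/q}a_j$, and, for the outer range, $\|\mathcal S_\beta(f_j)\chi_k\|_{L^q_{w_2}}\le C\big(w_2(B_k)/w_2(B_j)\big)^{1/q}a_j$.

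Finally I sum against the Herz weights. Factoring out $\tilde a_j:=w_1(B_j)^{\alpha/n}a_j$, the two off-diagonal pieces become sequence operators whose kernels carry the ratios $\big(w_2(B_k)/w_2(B_j)\big)^{1/q}$ and $\big(w_1(B_k)/w_1(B_j)\big)^{\alpha/n}$, which I control by the two standard comparisons valid for $B_1\subseteq B_2$: the $A_s$ upper bound $w(B_2)/w(B_1)\le C(|B_2|/|B_1|)^{s}$ (applied with $s=q_1,q_2$) and the $A_\infty$ reverse bound $w(B_1)/w(B_2)\le C(|B_1|/|B_2|)^{\delta}$ for some $\delta>0$. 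For the inner range ($B_j\subset B_k$) these turn the kernel into $C\,2^{(j-k)(n(1-q_2/q)-\alpha q_1)}$ (with $q_2=q_1$ in case (i)), which is summable exactly when $\alpha q_1<n(1-q_2/q)$ --- the upper constraint in both (i) and (ii). For the outer range ($B_k\subset B_j$) the kernel is a nonnegative power of $w_2(B_k)/w_2(B_j)$ times $\big(w_1(B_k)/w_1(B_j)\big)^{\alpha/n}$; in case (i), where $w_1=w_2$ and $q_1=q_2$, it equals $\big(w_1(B_k)/w_1(B_j)\big)^{1/q+\alpha/n}$, which decays geometrically precisely when $1/q+\alpha/n>0$, i.e. $\alpha q_1>-nq_1/q$, whereas in case (ii) the $w_2$-ratio already decays and the $w_1$-ratio is harmless exactly when $\alpha>0$. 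With geometric decay in $|k-j|$ secured, the passage to the full Herz norm is Young's inequality on $\ell^p(\mathbb Z)$ when $p\ge1$ and the elementary bound $(\sum x_j)^p\le\sum x_j^p$ when $0<p\le1$. The non-homogeneous spaces $K^{\alpha,p}_q$ are handled identically, truncating the sums at $k=0$ and replacing $\chi_0$ by $\widetilde\chi_0$.

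The step I expect to be the main obstacle is the two-sided control of the weight ratios: obtaining clean geometric decay requires using the $A_{q_i}$ upper estimate on one side of the diagonal and the $A_\infty$ reverse-doubling estimate on the other, and it is exactly the interaction of these one-sided bounds with the sign of $\alpha$ that produces the precise admissible windows for $\alpha$. Verifying that the inner and outer sums impose \emph{simultaneously} the thresholds $\alpha q_1<n(1-q_2/q)$ and $\alpha q_1>-nq_1/q$ (respectively $\alpha>0$), and that nothing is lost when passing between the regimes $w_1=w_2$ and $w_1\neq w_2$, is where the bookkeeping must be carried out with care.
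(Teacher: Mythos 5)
Your proposal is correct and follows essentially the same route as the paper: the same three-part annular decomposition with Theorem A on the diagonal band, the same pointwise size estimates $\mathcal S_\beta(f_j)(x)\le C\,2^{-kn}\|f_j\|_{L^1}$ (inner) and $C\,2^{-jn}\|f_j\|_{L^1}$ (outer) obtained from the support restriction on $t$, the same passage to $\|f_j\|_{L^q_{w_2}}$ via H\"older and the $A_q$ condition, and the same interplay of the $A_{q_i}$ doubling bound on one side of the diagonal with the reverse-H\"older ($A_\infty$) bound on the other to produce the admissible window for $\alpha$. The only cosmetic difference is that you invoke Young's inequality on $\ell^p$ for $p>1$ where the paper splits the geometric factor in half and applies H\"older's inequality; these are the same device.
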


\begin{theorem}
Let $0<\beta\le1$, $0<p\le1$, $1<q<\infty$, $w_1\in A_{q_1}$ and $w_2\in A_{q_2}$. If $1\le q_1<\infty$, $1\le q_2\le q$ and $\alpha q_1=n(1-{q_2}/q)$, then $\mathcal S_\beta$ is bounded from $\dot K^{\alpha,p}_q(w_1,w_2)$ $(K^{\alpha,p}_q(w_1,w_2))$ into $W\dot K^{\alpha,p}_q(w_1,w_2)$ $(WK^{\alpha,p}_q(w_1,w_2))$.
\end{theorem}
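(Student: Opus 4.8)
The plan is to prove the homogeneous case $\dot K^{\alpha,p}_q(w_1,w_2)\to W\dot K^{\alpha,p}_q(w_1,w_2)$ in detail, the non-homogeneous case being identical with the sums restricted to $k\ge0$ and $\widetilde\chi_0$ in place of $\chi_0$. Write $f=\sum_{j\in\mathbb Z}f_j$ with $f_j=f\chi_j$ and set $a_j=\|f\chi_j\|_{L^q_{w_2}}$, so that $\|f\|_{\dot K^{\alpha,p}_q(w_1,w_2)}^p=\sum_j[w_1(B_j)]^{\alpha p/n}a_j^p$. Since $A_\beta(\,\cdot\,)$ is a supremum it is subadditive, and Minkowski's inequality in $L^2(\Gamma(x),\,dy\,dt/t^{n+1})$ yields the sublinearity $\mathcal S_\beta(\sum_jf_j)\le\sum_j\mathcal S_\beta(f_j)$. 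Hence for $x\in C_k$ I would split $\mathcal S_\beta f(x)\le E_1(x)+E_2(x)+E_3(x)$, where $E_1=\sum_{j\le k-2}\mathcal S_\beta f_j$, $E_2=\sum_{|j-k|\le1}\mathcal S_\beta f_j$, $E_3=\sum_{j\ge k+2}\mathcal S_\beta f_j$. The quantity to bound is $\sup_\lambda\lambda^p\sum_k[w_1(B_k)]^{\alpha p/n}[w_2(E_k(\lambda,\mathcal S_\beta f))]^{p/q}$, and $E_k(\lambda,\mathcal S_\beta f)\subset\bigcup_i\{x\in C_k:E_i(x)>\lambda/3\}$; as we shall see the bounds for $E_1,E_3$ are constant on $C_k$, so those level sets are empty or equal to $C_k$ and their contribution is governed by $c_k:=[w_1(B_k)]^{\alpha p/n}[w_2(C_k)]^{p/q}$.

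The geometric input is a pointwise estimate away from $\operatorname{supp}f_j\subset C_j$: for $A_\beta(f_j)(y,t)\ne0$ one needs $z\in C_j$ with $|y-z|\le t$, which with $|x-y|<t$ forces $t\gtrsim|x-z|$. For $x\in C_k$ with $j\le k-2$ one has $|x-z|\approx2^k$, and inserting $A_\beta(f_j)(y,t)\le Ct^{-n}\int_{C_j}|f|$ and integrating $dy\,dt/t^{n+1}$ over the cone ($y$-slice of measure $\approx t^n$, $t$-integral over $t\gtrsim2^k$) gives $\mathcal S_\beta f_j(x)\le C2^{-kn}\int_{C_j}|f|$; symmetrically, for $j\ge k+2$ one gets $\mathcal S_\beta f_j(x)\le C2^{-jn}\int_{C_j}|f|$. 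Because $w_2\in A_{q_2}\subset A_q$ (as $q_2\le q$), Hölder's inequality with the $A_q$ condition gives $\int_{C_j}|f|\le C\,2^{jn}a_j\,w_2(B_j)^{-1/q}$. Writing $b_j=a_j\,w_2(B_j)^{-1/q}$, I obtain on $C_k$ the constant bounds $E_1\le C\sum_{j\le k-2}2^{(j-k)n}b_j=:D_{1,k}$ and $E_3\le C\sum_{j\ge k+2}b_j=:D_{3,k}$. Two facts about $A_p$ weights will be used: the doubling bound $w_i(B_k)/w_i(B_j)\le C2^{(k-j)nq_i}$ ($k\ge j$), which at the endpoint $\alpha q_1=n(1-q_2/q)$ gives precisely $c_k/c_j\le C2^{(k-j)np}$ for $k\ge j$; and reverse doubling, $c_k\ge C^{-1}2^{(k-j)\sigma}c_j$ for $k\ge j$ with some $\sigma>0$, whence $\sum_{k\le K}c_k\le Cc_K$. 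I also use $w_2(C_k)\approx w_2(B_k)$, so that $\sum_jc_jb_j^p\le\|f\|_{\dot K^{\alpha,p}_q(w_1,w_2)}^p$.

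For the local part, Theorem~A gives the weak $(q,q)$ bound $w_2(\{\mathcal S_\beta f_j>\lambda\})\le C\lambda^{-q}a_j^q$; summing the three indices $|j-k|\le1$, using $p/q\le1$ to push the outer exponent inside, and $w_1(B_k)\approx w_1(B_j)$ for $|j-k|\le1$, the contribution of $E_2$ is $\le C\|f\|_{\dot K^{\alpha,p}_q(w_1,w_2)}^p$, uniformly in $\lambda$. For $E_3$ the bound is constant on $C_k$, so $\lambda^p\sum_{k:\,D_{3,k}>\lambda}c_k\le\sum_kc_kD_{3,k}^p$; since $D_{3,k}^p\le\sum_{j\ge k+2}b_j^p$ ($p\le1$), interchanging summation and applying $\sum_{k\le j-2}c_k\le Cc_j$ (reverse doubling) bounds this by $C\sum_jc_jb_j^p\le C\|f\|_{\dot K^{\alpha,p}_q(w_1,w_2)}^p$. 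Thus $E_2$ and $E_3$ are harmless, and reducing their weak bound to the convergent strong-type sum costs nothing.

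The main obstacle is $E_1$ at the endpoint, where the same reduction $\lambda^p\sum_{k:D_{1,k}>\lambda}c_k\le\sum_kc_kD_{1,k}^p$ is fatal: by the resonance $c_k/c_j\sim2^{(k-j)np}$ against the decay $2^{(j-k)n}$ the series $\sum_kc_kD_{1,k}^p$ diverges, which is exactly why the endpoint is excluded from the strong-type Theorem~1.1 and only a weak-type bound survives. Here the genuine weak structure must be exploited. First, from $D_{1,k}^p\le\sum_{j\le k-2}2^{(j-k)np}b_j^p$ ($p\le1$) and $c_kb_j^p=(c_k/c_j)c_jb_j^p\le C2^{(k-j)np}c_jb_j^p$, I get the uniform bound $c_kD_{1,k}^p\le C\sum_{j\le k-2}c_jb_j^p\le C\|f\|_{\dot K^{\alpha,p}_q(w_1,w_2)}^p$ for every $k$. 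Second, a routine growth estimate for $b_j$ (from the reverse-doubling lower bounds on $w_1(B_j),w_2(B_j)$ together with $a_j\le\|f\|_{\dot K^{\alpha,p}_q(w_1,w_2)}[w_1(B_j)]^{-\alpha/n}$) shows $D_{1,k}\to0$ as $k\to+\infty$, so $\Omega_\lambda=\{k:D_{1,k}>\lambda\}$ is bounded above; let $K=\max\Omega_\lambda$. Finally, using $\sum_{k\le K}c_k\le Cc_K$ and $D_{1,K}>\lambda$,
\[
\lambda^p\sum_{k\in\Omega_\lambda}c_k\le\lambda^p\sum_{k\le K}c_k\le C\lambda^pc_K\le Cc_KD_{1,K}^p\le C\|f\|_{\dot K^{\alpha,p}_q(w_1,w_2)}^p,
\]
uniformly in $\lambda$. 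Collecting $E_1,E_2,E_3$ finishes the proof. The crux is thus replacing the divergent strong-type summation by the combination of the uniform estimate $c_kD_{1,k}^p\le C\|f\|^p$, the geometric growth of $(c_k)$, and the decay of $(D_{1,k})$.
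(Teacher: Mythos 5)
Your argument is correct and is essentially the paper's own proof: the same three-part annular decomposition, the same pointwise estimates (the analogues of (3.2)--(3.3) and (3.7)--(3.8)), and, for the critical incoming term $E_1$, the same device of a bound constant on each $C_k$, a maximal index $K$ with $D_{1,K}>\lambda$ (using $D_{1,k}\to0$, which is the paper's $\lim_{k\to\infty}A_k=0$), and the reverse-H\"older/reverse-doubling estimate to sum $\sum_{k\le K}c_k\le Cc_K$. The only (harmless) deviation is that you dispatch $E_3$ by a convergent strong-type summation, whereas the paper runs the same maximal-index weak-type argument for $I_3'$ as for $I_2'$; both work, since the endpoint resonance occurs only in $E_1$.
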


\begin{theorem}
Let $0<\beta\le1$, $0<p<\infty$, $1<q<\infty$, $w_1\in A_{q_1}$ and $w_2\in A_{q_2}$. If $\lambda>\max\{q_2,3\}$, then $\mathcal G^*_{\lambda,\beta}$ is bounded on $\dot K^{\alpha,p}_q(w_1,w_2)$ $(K^{\alpha,p}_q(w_1,w_2))$ provided that $w_1$ and $w_2$ satisfy either of the following

$(i)$ $w_1=w_2$, $1\le q_1=q_2\le q$ and $-{nq_1}/q<\alpha q_1<n(1-{q_2}/q);$

$(ii)$ $w_1\neq w_2$, $1\le q_1<\infty$, $1\le q_2\le q$ and $0<\alpha q_1<n(1-{q_2}/q)$.
\end{theorem}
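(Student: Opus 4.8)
The plan is to deduce this from the already-established boundedness of the area integral $\mathcal S_\beta$ (Theorem 1.1) by exploiting the pointwise relation between the intrinsic $g^*_\lambda$-function and the varying-aperture area integrals $\mathcal S_{\beta,2^j}$. First I would split the upper half-space in the defining integral of $\mathcal G^*_{\lambda,\beta}(f)(x)$ into the cone $\Gamma(x)=\{(y,t):|x-y|<t\}$ together with the dyadic shells $R_j=\{(y,t):2^{j-1}t\le|x-y|<2^jt\}$ for $j\ge1$. On $\Gamma(x)$ the factor $\big(t/(t+|x-y|)\big)^{\lambda n}$ is bounded, while on $R_j$ it is $\le C2^{-j\lambda n}$ and $R_j\subset\Gamma_{2^j}(x)$; hence
\[
\mathcal G^*_{\lambda,\beta}(f)(x)\le C\bigg(\sum_{j=0}^\infty 2^{-j\lambda n}\big(\mathcal S_{\beta,2^j}(f)(x)\big)^2\bigg)^{1/2}\le C\sum_{j=0}^\infty 2^{-j\lambda n/2}\,\mathcal S_{\beta,2^j}(f)(x),
\]
where the last inequality uses $(\sum_j b_j)^{1/2}\le\sum_j b_j^{1/2}$ for nonnegative $b_j$. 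This reduces everything to the varying-aperture operators.

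Next I would take the Herz norm of both sides. Since $\|\cdot\|_{\dot K^{\alpha,p}_q(w_1,w_2)}$ is an $\ell^p$-combination over the annuli $C_k$ of the $L^q_{w_2}$-norms, I move the sum over $j$ outside the norm: by Minkowski's inequality when $p\ge1$, and by the $p$-subadditivity of $\|\cdot\|_{\dot K^{\alpha,p}_q(w_1,w_2)}^p$ when $0<p\le1$ (which holds because $\|(f+g)\chi_k\|_{L^q_{w_2}}^p\le\|f\chi_k\|_{L^q_{w_2}}^p+\|g\chi_k\|_{L^q_{w_2}}^p$ for $p\le1$). In either case the proof is complete once one has an aperture-dependent estimate of the form
\[
\big\|\mathcal S_{\beta,2^j}(f)\big\|_{\dot K^{\alpha,p}_q(w_1,w_2)}\le C\,2^{j\theta}\,\big\|f\big\|_{\dot K^{\alpha,p}_q(w_1,w_2)},
\]
with $\theta$ independent of $j$, together with the summability of the resulting geometric series $\sum_j 2^{-j\lambda n/2}2^{j\theta}$ (resp.\ its $p$-th powers).

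To get the aperture-dependent bound I would rerun the proof of Theorem 1.1 with aperture $2^j$ in place of $1$, tracking the dependence of every constant on $2^j$. Two ingredients acquire a factor. First, the weighted $L^q_{w_2}$ bound for $\mathcal S_{\beta,2^j}$ itself: widening the cone aperture from $1$ to $2^j$ costs, for $w_2\in A_{q_2}$, a factor $\le C2^{jnq_2/2}$, coming from the $A_{q_2}$-growth $w_2(2^jB)\le C(2^j)^{nq_2}w_2(B)$ over dilated balls (this is the weighted analogue of the classical unweighted scaling $\|S_\gamma f\|_{L^2}=\gamma^{n/2}\|Sf\|_{L^2}$, recovered at $q_2=1$). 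Second, the far-field annular estimates: the wider cone $\Gamma_{2^j}(x)$ couples an annulus $C_k$ to $C_{k\pm j}$, so the geometric decay series is shifted by $j$, producing additional polynomial growth in $2^j$ from the ratios $w_1(B_{k+j})/w_1(B_k)$ and the corresponding $w_2$-measures, which over the admissible range of $(\alpha,q_1,q_2)$ does not exceed a factor $C2^{3jn/2}$. Collecting the two sources gives $\theta\le\tfrac n2\max\{q_2,3\}$, so that $\sum_j 2^{-j\lambda n/2}2^{j\theta}=\sum_j 2^{-jn(\lambda-\max\{q_2,3\})/2}$ converges exactly when $\lambda>\max\{q_2,3\}$, which is the hypothesis.

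The main obstacle is this last step: obtaining the correct, weight-sensitive exponent for the $2^j$-growth of $\|\mathcal S_{\beta,2^j}\|$. Both contributions must be quantified precisely enough that their combined growth is dominated by the decay $2^{-j\lambda n/2}$ produced by the $g^*_\lambda$-weight, and the threshold $\lambda>\max\{q_2,3\}$ is exactly what makes the combined geometric series summable; getting the ``$q_2$'' from the $A_{q_2}$ dilation estimate and the ``$3$'' from the far-field Herz shift to line up with this threshold is the delicate bookkeeping. The non-homogeneous case $K^{\alpha,p}_q(w_1,w_2)$ is handled identically, since the only modification is that the outer sum runs over $k\ge0$ with $\widetilde\chi_0$ replacing $\chi_0$, which does not affect any of the estimates above.
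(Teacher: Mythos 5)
Your overall strategy is the same as the paper's: split the defining integral of $\mathcal G^*_{\lambda,\beta}$ over the cone and the dyadic shells $\{2^{j-1}t\le|x-y|<2^jt\}$ to get $\mathcal G^*_{\lambda,\beta}(f)(x)\le C\sum_{j\ge0}2^{-j\lambda n/2}\mathcal S_{\beta,2^j}(f)(x)$, then control each $\mathcal S_{\beta,2^j}$ with an aperture-dependent constant and sum the geometric series using $\lambda>\max\{q_2,3\}$. You also correctly identify the two sources of growth in $2^j$: the weighted $L^q_{w_2}$ operator norm of $\mathcal S_{\beta,2^j}$ for the local piece, and the far-field kernel estimate (which indeed produces $2^{3jn/2}$, namely $2^{jn/2}$ from the volume of the widened $y$-ball and $2^{jn}$ from the lowered starting height $t\gtrsim|x|/2^{j+2}$ of the $t$-integral; the paper gets this by a direct pointwise computation on each fixed annulus $C_\ell$, not by shifting the annular coupling by $j$ as you describe, though the exponent you land on is the right one).

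The genuine gap is the justification of the aperture estimate $\big\|\mathcal S_{\beta,2^j}(f)\big\|_{L^q_{w}}\lesssim 2^{jnq_2/2}\big\|\mathcal S_{\beta}(f)\big\|_{L^q_{w}}$. You derive it in one line from the dilation bound $w(2^jB)\le C2^{jnq_2}w(B)$, but that argument (integrate $\chi_{|x-y|<2^jt}$ against $w$ and apply Fubini) only works when $q=2$, because only then is $\|\mathcal S_{\beta,2^j}(f)\|_{L^q_w}^q$ a single integral of $(A_\beta f)^2$ against $w(B(y,2^jt))$. For $q>2$ the paper needs a duality argument: write $\|\mathcal S_{\beta,2^j}(f)^2\|_{L^{q/2}_w}$ by pairing with $b\in L^{(q/2)'}_w$ and dominate $\int_{|x-y|<2^jt}b\,w\,dx$ by $2^{jnq_2}\int_{|x-y|<t}M_w(b)\,w\,dx$, then use the boundedness of the weighted maximal operator $M_w$. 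For $1<q<2$ the paper uses a good-$\lambda$/distribution-function argument in the style of Torchinsky, comparing $\{\mathcal S_{\beta,2^j}f>\lambda\}$ with the enlarged set $\{M_w(\chi_{\Omega_\lambda})>c\,2^{-jnq_2}\}$, and the resulting exponent is $2^{jnq_2/q}$, not $2^{jnq_2/2}$ (this still sums because $\lambda>3\ge 2q_2/q$, but it is a different constant than the one you assert). These three cases are Propositions 4.1--4.3 of the paper and constitute the technical core of the proof of Theorem 1.3; without them your reduction is incomplete. Everything else in your outline (the interchange of the $j$-sum with the Herz norm via Minkowski for $p\ge1$ or $p$-subadditivity for $p\le1$, the reuse of the Theorem 1.1 machinery for the far-field sums, and the identical treatment of the non-homogeneous case) matches the paper and is sound.
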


\begin{theorem}
Let $0<\beta\le1$, $0<p\le1$, $1<q<\infty$, $w_1\in A_{q_1}$ and $w_2\in A_{q_2}$. If $1\le q_1<\infty$, $1\le q_2\le q$, $\alpha q_1=n(1-{q_2}/q)$ and $\lambda>\max\{q_2,3\}$, then $\mathcal G^*_{\lambda,\beta}$ is bounded from $\dot K^{\alpha,p}_q(w_1,w_2)$ $(K^{\alpha,p}_q(w_1,w_2))$ into $W\dot K^{\alpha,p}_q(w_1,w_2)$ $(WK^{\alpha,p}_q(w_1,w_2))$.
\end{theorem}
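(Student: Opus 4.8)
The plan is to prove the homogeneous statement $\dot K^{\alpha,p}_q(w_1,w_2)\to W\dot K^{\alpha,p}_q(w_1,w_2)$; the non-homogeneous case differs only in that all outer sums start at $k=0$ and the central piece $\widetilde\chi_0$ is folded into the diagonal block. I first record that $\mathcal G^*_{\lambda,\beta}$ is sublinear (from $A_\beta(f+g)\le A_\beta(f)+A_\beta(g)$ and Minkowski's inequality in the weighted $L^2$ of ${\mathbb R}^{n+1}_+$). Splitting ${\mathbb R}^{n+1}_+$ into the cone $\Gamma(x)$ and the shells $\{2^{l-1}t\le|x-y|<2^lt\}$ and bounding $\big(t/(t+|x-y|)\big)^{\lambda n}$ by $1$ and by $2^{(1-l)\lambda n}$ respectively, I obtain the pointwise domination
\[
\mathcal G^*_{\lambda,\beta}(f)(x)\le\mathcal S_\beta(f)(x)+\sum_{l=1}^\infty 2^{(1-l)\lambda n/2}\,\mathcal S_{\beta,2^l}(f)(x).
\]
Since $0<p\le1$ makes the weak Herz quasi-norm satisfy $\big\|\sum_l h_l\big\|_{W\dot K^{\alpha,p}_q}^p\lesssim\sum_l\big\|h_l\big\|_{W\dot K^{\alpha,p}_q}^p$, it suffices to bound each aperture: the term $\mathcal S_\beta(f)$ is exactly the content of Theorem 1.2, and for $\mathcal S_{\beta,2^l}$ I would rerun the argument below with $\Gamma_{2^l}(x)$ in place of $\Gamma(x)$. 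The aperture enters only through the weighted strong $(q,q)$ bound and the far-field size estimates, each contributing a factor $\lesssim 2^{ln\theta}$ with $2\theta=\max\{q_2,3\}$, so the hypothesis $\lambda>\max\{q_2,3\}$ is precisely $\lambda/2>\theta$, making $\sum_{l}2^{(1-l)\lambda np/2}2^{ln\theta p}<\infty$ and collapsing the series.

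For a fixed aperture $\gamma=2^l\ge1$ I decompose $f=\sum_j f_j$, $f_j=f\chi_j$, fix $\mu>0$, and for $x\in C_k$ split the operator into the diagonal block $\sum_{|j-k|\le1}$, the inner tail $\sum_{j\le k-2}$, and the outer tail $\sum_{j\ge k+2}$; correspondingly $[w_2(E_k(\mu,\cdot))]^{p/q}$ splits at height $\mu/3$ using $[a+b+c]^{p/q}\le a^{p/q}+b^{p/q}+c^{p/q}$. The diagonal block is handled by Chebyshev's inequality and the weighted $L^q$ boundedness (Theorem A in its aperture-$\gamma$ form, $w_2\in A_{q_2}\subseteq A_q$), and is insensitive to $\alpha$. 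For the inner tail, $\int\varphi=0$, the Lipschitz-$\beta$ bound and Hölder's inequality with the $A_q$ condition on $w_2$ give, for $x\in C_k$, a size estimate of the form $\mathcal S_{\beta,\gamma}(f_j)(x)\lesssim\gamma^{3n/2}\,2^{-(k-j)n}\,w_2(B_j)^{-1/q}\|f_j\|_{L^q_{w_2}}$, and symmetrically for the outer tail. Combining with the two-weight comparisons $w_i(B_k)/w_i(B_j)\lesssim 2^{(k-j)nq_i}$ ($k\ge j$) and their reverse-doubling lower bounds, the outer tail is governed by the lower constraint $\alpha q_1>-nq_1/q$, which holds strictly at the endpoint (indeed $\alpha q_1=n(1-q_2/q)\ge0$), so it sums geometrically; the inner tail, after summing against $[w_1(B_k)]^{\alpha p/n}$, carries the ratio $2^{(k-j)p[\alpha q_1-n(1-q_2/q)]}$, which at $\alpha q_1=n(1-q_2/q)$ equals $1$. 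Thus strong-type summation of the inner tail fails, and this is exactly where the weak space and $p\le1$ become indispensable.

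The heart of the proof is therefore the inner tail at the critical index, which I reduce to a weighted discrete Hardy inequality. The inner-tail contribution to the weak norm is controlled by $\mu^p\sum_{k}[w_1(B_k)]^{\alpha p/n}[w_2(B_k)]^{p/q}\,\mathbf 1\{\mathcal A_k>\mu/3\}$, where the above size estimate and the endpoint identity give $\mathcal A_k\lesssim[w_1(B_k)]^{-\alpha/n}[w_2(B_k)]^{-1/q}\sum_{j\le k-2}c_j$, with $c_j:=[w_1(B_j)]^{\alpha/n}\|f_j\|_{L^q_{w_2}}$ so that $\sum_j c_j^p=\|f\|_{\dot K^{\alpha,p}_q}^p$. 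The decisive point is that $0<p\le1$ yields $\ell^p\hookrightarrow\ell^1$, hence $\sum_{j\le k-2}c_j\le\big(\sum_j c_j^p\big)^{1/p}=\|f\|_{\dot K^{\alpha,p}_q}$ uniformly in $k$; this caps $\mathcal A_k$ and forces $\mathbf 1\{\mathcal A_k>\mu/3\}=0$ whenever $[w_1(B_k)]^{\alpha/n}[w_2(B_k)]^{1/q}\gtrsim\|f\|_{\dot K^{\alpha,p}_q}/\mu$, i.e. for all $k$ beyond a threshold $k^*(\mu)$. Since $w_1,w_2\in A_\infty$ make $[w_1(B_k)]^{\alpha p/n}[w_2(B_k)]^{p/q}$ grow geometrically in $k$, the surviving sum over $k\le k^*(\mu)$ is dominated by its top term, which is comparable to $(\|f\|_{\dot K^{\alpha,p}_q}/\mu)^p$; multiplying by $\mu^p$ gives a bound by $C\|f\|_{\dot K^{\alpha,p}_q}^p$ independent of $\mu$, as required.

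The main obstacle is precisely this critical-index step: with the geometric gain gone, boundedness survives only into the weak space, and it does so only because $p\le1$ both caps the partial sums $\sum_{j\le k-2}c_j$ and collapses the geometrically growing level-set sum to its largest term. The accompanying technical burdens are the two-weight bookkeeping --- coupling $w_1\in A_{q_1}$ and $w_2\in A_{q_2}$ through their doubling and reverse-doubling exponents so that, under $1\le q_2\le q$ and $\alpha q_1=n(1-q_2/q)$, every non-critical exponent has the favorable sign --- and verifying that the aperture growth constant satisfies $2\theta=\max\{q_2,3\}$, which is exactly what ties the convergence of the aperture series in the first paragraph to the hypothesis $\lambda>\max\{q_2,3\}$.
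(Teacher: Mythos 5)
Your overall architecture is the paper's: decompose $\mathcal G^*_{\lambda,\beta}$ into the aperture pieces $\mathcal S_\beta$ and $\mathcal S_{\beta,2^l}$ via the annular splitting of the kernel $\big(t/(t+|x-y|)\big)^{\lambda n}$; control the diagonal block by an aperture-dependent weighted $L^q$ bound (with constant $2^{lnq_2/2}$ for $q\ge2$ and $2^{lnq_2/q}$ for $q<2$ --- the paper proves exactly these in Propositions 4.1--4.3 by duality and a good-$\lambda$ argument, whereas you assert the factor $2^{ln\theta}$ without proof); control the inner and outer tails by size estimates carrying the factor $2^{3ln/2}$; and close with the critical-index weak-type argument, using $\ell^p\hookrightarrow\ell^1$ for $p\le1$ to cap $\sum_{j\le k-2}c_j$ by $\|f\|_{\dot K^{\alpha,p}_q}$ and the reverse-H\"older lower bound to collapse the surviving sum over $k\le k^*(\mu)$ to its top term. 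That last step is the heart of the matter and your account of it is correct; it is precisely the paper's $k_\lambda$ argument from Theorem 1.2. Your exponent bookkeeping ($2\theta=\max\{q_2,3\}$ tying the aperture series to the hypothesis on $\lambda$) also matches the paper's.

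The one step that fails as written is the reduction via ``$\big\|\sum_l h_l\big\|_{W\dot K^{\alpha,p}_q}^p\lesssim\sum_l\big\|h_l\big\|_{W\dot K^{\alpha,p}_q}^p$''. The condition $0<p\le1$ gives $p$-subadditivity of the outer $\ell^p$ sum over $k$, but the weak Herz quasi-norm has weak-$L^q$ homogeneity in the level sets (it is $\sup_\mu\mu\big(\sum_k[w_1(B_k)]^{\alpha p/n}[w_2(E_k(\mu,\cdot))]^{p/q}\big)^{1/p}$ with $q>1$), and distribution sets are not countably subadditive at a fixed height: to bound $w_2\big(E_k(\mu,\sum_l h_l)\big)$ you must split the height, $E_k(\mu,\sum_l h_l)\subseteq\bigcup_l E_k(\epsilon_l\mu,h_l)$ with $\sum_l\epsilon_l=1$, which yields $\sum_l\epsilon_l^{-p}\|h_l\|_{W\dot K^{\alpha,p}_q}^p$ rather than $\sum_l\|h_l\|_{W\dot K^{\alpha,p}_q}^p$. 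This is repairable in your scheme, since the margin $2^{-l(\lambda n/2-n\theta)}$ leaves room to absorb $\epsilon_l^{-p}$ with $\epsilon_l\sim c\,2^{-ln\eta}$ for small $\eta>0$; but it is cleaner to do what the paper does, namely sum the series in $l$ \emph{before} ever passing to the weak norm --- inside the $L^q_{w_2}$ estimate for the diagonal block (inequality (4.8)) and inside the pointwise size estimates for $f_2$ and $f_3$ --- so that the weak-type argument is run only once, for a single function, and no countable subadditivity of the weak quasi-norm is needed.
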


In \cite{wilson1}, Wilson also showed that for any $0<\beta\le1$, the functions $\mathcal S_\beta(f)(x)$ and $\mathcal G_\beta(f)(x)$ are pointwise comparable, with comparability constants depending only on $\beta$ and $n$. Thus, as a direct consequence of Theorems 1.1 and 1.2, we obtain the following:

\newtheorem{corollary}[theorem]{Corollary}

\begin{corollary}
Let $0<\beta\le1$, $0<p<\infty$, $1<q<\infty$,$w_1\in A_{q_1}$ and $w_2\in A_{q_2}$. Then $\mathcal G_\beta$ is bounded on $\dot K^{\alpha,p}_q(w_1,w_2)$ $(K^{\alpha,p}_q(w_1,w_2))$ provided that $w_1$ and $w_2$ satisfy either of the following

$(i)$ $w_1=w_2$, $1\le q_1=q_2\le q$ and $-{nq_1}/q<\alpha q_1<n(1-{q_2}/q);$

$(ii)$ $w_1\neq w_2$, $1\le q_1<\infty$, $1\le q_2\le q$ and $0<\alpha q_1<n(1-{q_2}/q)$.
\end{corollary}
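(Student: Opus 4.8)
The plan is to deduce the corollary directly from the pointwise equivalence recalled just before the statement together with the monotonicity of the weighted Herz (quasi-)norm. By Wilson's result, for each fixed $0<\beta\le1$ there are constants $0<c_1\le c_2<\infty$, depending only on $\beta$ and $n$, such that
\[
c_1\,\mathcal S_\beta(f)(x)\le \mathcal G_\beta(f)(x)\le c_2\,\mathcal S_\beta(f)(x)\qquad\text{for every }x\in\mathbb R^n .
\]
Thus the entire content of the corollary should reduce to transporting the estimate of Theorem 1.1 across this pointwise comparison, and the hypotheses (i), (ii) here have been chosen to match those of Theorem 1.1 verbatim.

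First I would record the monotonicity of the norm defined in (1.7). That expression depends on its argument only through its absolute value and is assembled by taking the $L^q_{w_2}$-norms of the restrictions $g\chi_k$ to the annuli $C_k$ and then forming the weighted $\ell^p$-sum of these nonnegative quantities. Each operation respects the pointwise order: if $0\le G(x)\le H(x)$ almost everywhere, then $\|G\chi_k\|_{L^q_{w_2}}\le\|H\chi_k\|_{L^q_{w_2}}$ for every $k\in\mathbb Z$, and summing the resulting inequality between nonnegative terms gives $\|G\|_{\dot K^{\alpha,p}_q(w_1,w_2)}\le\|H\|_{\dot K^{\alpha,p}_q(w_1,w_2)}$. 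The same argument applies word for word to the non-homogeneous norm (1.8), with the sum running over $k\ge0$ and $\chi_k$ replaced by $\widetilde\chi_k$.

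Then I would chain the two facts. Applying the monotonicity just established to the right-hand inequality $\mathcal G_\beta(f)\le c_2\,\mathcal S_\beta(f)$ yields $\|\mathcal G_\beta(f)\|_{\dot K^{\alpha,p}_q(w_1,w_2)}\le c_2\,\|\mathcal S_\beta(f)\|_{\dot K^{\alpha,p}_q(w_1,w_2)}$, and under the stated conditions Theorem 1.1 gives $\|\mathcal S_\beta(f)\|_{\dot K^{\alpha,p}_q(w_1,w_2)}\le C\,\|f\|_{\dot K^{\alpha,p}_q(w_1,w_2)}$; combining the two produces the desired bound with constant $c_2C$, and the non-homogeneous case is identical. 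Since this is a formal deduction from an established pointwise comparison and an already-proved theorem, there is no genuine analytic obstacle. The only point deserving a word of care is the regime $0<p<1$, where the Herz functional fails to be subadditive; but the monotonicity used above is applied termwise, \emph{before} any summation, so it holds irrespective of whether $p\ge1$, and the argument goes through unchanged.
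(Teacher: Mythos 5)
Your proposal is correct and follows exactly the route the paper takes: the paper derives this corollary as a direct consequence of Theorem 1.1 via Wilson's pointwise comparability of $\mathcal S_\beta(f)$ and $\mathcal G_\beta(f)$, which is precisely your argument. Your extra remark on termwise monotonicity of the Herz quasi-norm for $0<p<1$ is a sensible clarification but does not change the substance.
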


\begin{corollary}
Let $0<\beta\le1$, $0<p\le1$, $1<q<\infty$, $w_1\in A_{q_1}$ and $w_2\in A_{q_2}$. If $1\le q_1<\infty$, $1\le q_2\le q$ and $\alpha q_1=n(1-{q_2}/q)$, then $\mathcal G_\beta$ is bounded from $\dot K^{\alpha,p}_q(w_1,w_2)$ $(K^{\alpha,p}_q(w_1,w_2))$ into $W\dot K^{\alpha,p}_q(w_1,w_2)$ $(WK^{\alpha,p}_q(w_1,w_2))$.
\end{corollary}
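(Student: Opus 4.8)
The plan is to obtain this weak-type bound as a formal consequence of Theorem 1.2, exploiting the pointwise equivalence of $\mathcal S_\beta(f)$ and $\mathcal G_\beta(f)$ recalled just above the corollaries. By Wilson's result, there exist constants $c_1,c_2>0$ depending only on $\beta$ and $n$ such that
\begin{equation*}
c_1\mathcal G_\beta(f)(x)\le\mathcal S_\beta(f)(x)\le c_2\mathcal G_\beta(f)(x)
\end{equation*}
for every $x\in\mathbb R^n$. The entire argument then reduces to the elementary fact that pointwise comparability of two nonnegative functions forces comparability of their weighted weak Herz norms, so that the weak-type control already established for $\mathcal S_\beta$ transfers to $\mathcal G_\beta$.

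First I would convert the pointwise inequality into an inclusion of level sets. From $\mathcal G_\beta(f)(x)\le c_1^{-1}\mathcal S_\beta(f)(x)$ it follows that $\mathcal G_\beta(f)(x)>\lambda$ implies $\mathcal S_\beta(f)(x)\ge c_1\mathcal G_\beta(f)(x)>c_1\lambda$, hence for every $k\in\mathbb Z$ and every $\lambda>0$,
\begin{equation*}
E_k\big(\lambda,\mathcal G_\beta(f)\big)\subseteq E_k\big(c_1\lambda,\mathcal S_\beta(f)\big).
\end{equation*}
Monotonicity of the measure $w_2$ then gives $w_2\big(E_k(\lambda,\mathcal G_\beta(f))\big)\le w_2\big(E_k(c_1\lambda,\mathcal S_\beta(f))\big)$ for all $k$.

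Next I would insert this term-by-term into the definition of the weak Herz norm and perform a change of scale $\mu=c_1\lambda$ in the supremum:
\begin{align*}
\big\|\mathcal G_\beta(f)\big\|_{W\dot K^{\alpha,p}_q(w_1,w_2)}
&=\sup_{\lambda>0}\lambda\left(\sum_{k\in\mathbb Z}\big[w_1(B_k)\big]^{\alpha p/n}\big[w_2(E_k(\lambda,\mathcal G_\beta(f)))\big]^{p/q}\right)^{1/p}\\
&\le\sup_{\lambda>0}\lambda\left(\sum_{k\in\mathbb Z}\big[w_1(B_k)\big]^{\alpha p/n}\big[w_2(E_k(c_1\lambda,\mathcal S_\beta(f)))\big]^{p/q}\right)^{1/p}\\
&=\frac{1}{c_1}\big\|\mathcal S_\beta(f)\big\|_{W\dot K^{\alpha,p}_q(w_1,w_2)}.
\end{align*}
Since the hypotheses on $\alpha,p,q,w_1,w_2$ here are exactly those of Theorem 1.2, that theorem applies to $\mathcal S_\beta$ and yields $\|\mathcal S_\beta(f)\|_{W\dot K^{\alpha,p}_q(w_1,w_2)}\le C\|f\|_{\dot K^{\alpha,p}_q(w_1,w_2)}$; combining the two estimates closes the homogeneous case. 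The non-homogeneous case is handled identically after replacing the sum over $\mathbb Z$ by the sum over $k\ge0$ and $E_k$ by $\widetilde E_k$, and invoking the non-homogeneous half of Theorem 1.2. I do not expect any genuine obstacle: the only point needing care is the scaling bookkeeping in $\lambda$, and the statement is truly a corollary once Wilson's pointwise equivalence is in hand.
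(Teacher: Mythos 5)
Your proposal is correct and follows essentially the same route as the paper, which derives the corollary directly from Wilson's pointwise comparability of $\mathcal S_\beta(f)$ and $\mathcal G_\beta(f)$ together with Theorem 1.2; you have merely written out the level-set inclusion and the rescaling $\mu=c_1\lambda$ that the paper leaves implicit. The bookkeeping is accurate, so nothing further is needed.
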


\section{$A_p$ weights}

The classical $A_p$ weight theory was first introduced by Muckenhoupt in the study of weighted $L^p$ boundedness of Hardy-Littlewood maximal functions in \cite{muckenhoupt1}. A weight $w$ is a nonnegative, locally integrable function on $\mathbb R^n$, $B=B(x_0,r_B)$ denotes the ball with the center $x_0$ and radius $r_B$. For any ball $B$ and $\lambda>0$, $\lambda B$ denotes the ball concentric with $B$ whose radius is $\lambda$ times as long. For a given weight function $w$ and a measurable set $E$, we also denote the Lebesgue measure of $E$ by $|E|$ and set weighted measure $w(E)=\int_E w(x)\,dx$. We say that $w$ is in the Muckenhoupt class $A_p$ with $1<p<\infty$, if there exists a constant $C>0$ such that for every ball $B\subseteq \mathbb R^n$,
\begin{equation}
\left(\frac1{|B|}\int_B w(x)\,dx\right)\left(\frac1{|B|}\int_B w(x)^{-1/{(p-1)}}\,dx\right)^{p-1}\le C.
\end{equation}
For the endpoint case $p=1$, $w\in A_1$, if
\begin{equation}
\frac1{|B|}\int_B w(x)\,dx\le C\cdot\underset{x\in B}{\mbox{ess\,inf}}\,w(x)\quad\mbox{for every ball}\;B\subseteq\mathbb R^n,
\end{equation}
where $C$ is a positive constant which is independent of the choice of $B$. The smallest value of $C$ such that the above inequalities hold is called the $A_p$ characteristic constant of $w$ and denoted by $[w]_{A_p}$. If there exist two constants $r>1$ and $C>0$ such that the following reverse H\"older inequality holds
\begin{equation}
\left(\frac{1}{|B|}\int_B w(x)^r\,dx\right)^{1/r}\le C\left(\frac{1}{|B|}\int_B w(x)\,dx\right) \quad\mbox{for every ball}\;B\subseteq\mathbb R^n,
\end{equation}
then we say that $w$ satisfies the reverse H\"older condition of order $r$ and write $w\in RH_r$.
It is well known that if $w\in A_p$ with $1\le p<\infty$, then $w\in A_q$ for all $q>p$. Moreover, if $w\in A_p$ with $1\le p<\infty$, then there exists $r>1$ such that $w\in RH_r$.

The following properties for $A_p$ weights will be repeatedly used in this paper.

\newtheorem{lemma}[theorem]{Lemma}

\begin{lemma}[\cite{garcia2}]
Let $w\in A_p$ with $p\ge1$. Then, for any ball $B$, there exists an absolute constant $C>0$ such that
\begin{equation}
w(2B)\le C\,w(B).
\end{equation}
In general, for any $\lambda>1$, we have
\begin{equation}
w(\lambda B)\le C\cdot\lambda^{np}w(B),
\end{equation}
where $C$ does not depend on $B$ nor on $\lambda$.
\end{lemma}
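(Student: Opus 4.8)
The plan is to prove the general doubling estimate $w(\lambda B)\le C\lambda^{np}w(B)$ directly for every $\lambda>1$; the special case recorded as the first display then follows by taking $\lambda=2$ and absorbing the factor $2^{np}$ into the constant. I would treat the cases $p>1$ and $p=1$ separately, since the $A_p$ condition takes a different analytic form in each.

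For $p>1$, the idea I would use is to exploit the duality between $w$ and its conjugate weight $\sigma=w^{-1/(p-1)}$ through H\"older's inequality, while applying the $A_p$ condition on the \emph{larger} ball $\lambda B$. Concretely, writing $|B|=\int_B w^{1/p}\,w^{-1/p}\,dx$ and applying H\"older with exponents $p$ and $p'=p/(p-1)$, I obtain $|B|\le w(B)^{1/p}\big(\int_B\sigma\,dx\big)^{(p-1)/p}\le w(B)^{1/p}\big(\int_{\lambda B}\sigma\,dx\big)^{(p-1)/p}$, where the last step merely uses $B\subseteq\lambda B$. On the other hand, the $A_p$ inequality applied to $\lambda B$ reads $w(\lambda B)\big(\int_{\lambda B}\sigma\,dx\big)^{p-1}\le[w]_{A_p}|\lambda B|^p$, which bounds $\big(\int_{\lambda B}\sigma\,dx\big)^{(p-1)/p}$ by $[w]_{A_p}^{1/p}|\lambda B|\,w(\lambda B)^{-1/p}$. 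Substituting this into the previous estimate and rearranging gives $w(\lambda B)^{1/p}\le[w]_{A_p}^{1/p}w(B)^{1/p}\,|\lambda B|/|B|$, and raising to the $p$-th power together with $|\lambda B|/|B|=\lambda^n$ yields exactly $w(\lambda B)\le[w]_{A_p}\lambda^{np}w(B)$.

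For $p=1$ the argument is shorter and I would use the monotonicity of the essential infimum: since $B\subseteq\lambda B$ we have $\mbox{ess\,inf}_{\lambda B}\,w\le\mbox{ess\,inf}_{B}\,w$. Then the $A_1$ condition on $\lambda B$ gives $w(\lambda B)\le[w]_{A_1}|\lambda B|\,\mbox{ess\,inf}_{\lambda B}\,w\le[w]_{A_1}|\lambda B|\,\mbox{ess\,inf}_{B}\,w\le[w]_{A_1}\tfrac{|\lambda B|}{|B|}\int_B w\,dx=[w]_{A_1}\lambda^{n}w(B)$, which is the claim since $np=n$ when $p=1$.

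There is no genuine obstacle here; the only points requiring care are the bookkeeping of the H\"older exponents and, conceptually, the decision to invoke the $A_p$ condition on the dilated ball $\lambda B$ rather than on $B$, which is precisely what places the weighted measure $w(\lambda B)$ on the correct side of the inequality. The constant produced is $C=[w]_{A_p}$, which indeed depends neither on $B$ nor on $\lambda$.
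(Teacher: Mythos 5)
Your proof is correct: the H\"older/duality argument for $p>1$ and the essential-infimum argument for $p=1$ are exactly the standard proof of the doubling property, with the key step (applying the $A_p$ condition on the dilated ball $\lambda B$) handled properly and the constant $[w]_{A_p}$ correctly independent of $B$ and $\lambda$. The paper itself gives no proof, citing Garc\'ia-Cuerva and Rubio de Francia, and your argument is precisely the one found in that reference, so there is nothing further to compare.
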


\begin{lemma}[\cite{garcia2,gundy}]
Let $w\in A_p\cap RH_r$, $p\ge1$ and $r>1$. Then there exist constants $C_1$, $C_2>0$ such that
\begin{equation}
C_1\left(\frac{|E|}{|B|}\right)^p\le\frac{w(E)}{w(B)}\le C_2\left(\frac{|E|}{|B|}\right)^{(r-1)/r}
\end{equation}
for any measurable subset $E$ of a ball $B$.
\end{lemma}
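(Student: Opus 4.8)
The plan is to establish the two inequalities separately, since the upper estimate rests on the reverse Hölder condition $RH_r$ while the lower estimate rests on the $A_p$ condition, and the two arguments do not interact.

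For the upper bound I would start from $w(E)=\int_B w(x)\chi_{_E}(x)\,dx$ and apply Hölder's inequality with the conjugate exponents $r$ and $r'=r/(r-1)$, which yields $w(E)\le\big(\int_B w^r\big)^{1/r}|E|^{(r-1)/r}$. The reverse Hölder inequality (2.3) then bounds $\big(\int_B w^r\big)^{1/r}$ by $C|B|^{1/r}\cdot\frac1{|B|}\int_B w=C\,w(B)|B|^{1/r-1}$. Substituting this and collecting the powers of $|B|$ produces exactly $w(E)\le C_2\,w(B)\big(|E|/|B|\big)^{(r-1)/r}$, which is the right-hand inequality.

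For the lower bound I would split according to whether $p=1$ or $p>1$. When $p=1$ the condition (2.2) gives $w(x)\ge C^{-1}w(B)/|B|$ for almost every $x\in B$, and integrating this over $E$ immediately yields $w(E)\ge C_1\,w(B)|E|/|B|$. When $p>1$, I would write $|E|=\int_E w^{1/p}w^{-1/p}\,dx$ and apply Hölder with exponents $p$ and $p'=p/(p-1)$ to obtain $|E|\le w(E)^{1/p}\big(\int_B w^{-1/(p-1)}\big)^{(p-1)/p}$. The $A_p$ condition (2.1) controls the last factor: it gives $\big(\frac1{|B|}\int_B w^{-1/(p-1)}\big)^{p-1}\le C|B|/w(B)$, and after raising this to the power $(p-1)/p$ and restoring the appropriate power of $|B|$ one finds $\big(\int_B w^{-1/(p-1)}\big)^{(p-1)/p}\le C^{1/p}|B|\,w(B)^{-1/p}$. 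Combining the two displays gives $|E|\le C^{1/p}|B|\big(w(E)/w(B)\big)^{1/p}$, which rearranges to the claimed lower bound with $C_1=C^{-1}$.

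There is no genuine obstacle here; each half is a short pairing of Hölder's inequality with the defining inequality of the relevant weight class. The only point demanding care is the exponent bookkeeping — matching the powers of $|B|$ so that they cancel and leave the scale-invariant ratios $|E|/|B|$ and $w(E)/w(B)$ — together with the observation that all constants depend only on $[w]_{A_p}$ and the $RH_r$ constant, hence are uniform in $B$ and $E$. I would also flag that the $p=1$ case must be handled on its own, since (2.2) is not the naive $p\to1$ limit of (2.1) in a form that feeds directly into the Hölder step.
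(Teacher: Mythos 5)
Your proof is correct, and the exponent bookkeeping in both halves checks out: the upper bound is H\"older against $\chi_{_E}$ followed by the $RH_r$ inequality, and the lower bound is H\"older applied to $|E|=\int_E w^{1/p}w^{-1/p}$ followed by the $A_p$ condition, with the $p=1$ case rightly handled separately via the essential-infimum form of $A_1$. The paper itself gives no proof of this lemma --- it is quoted from \cite{garcia2,gundy} --- but your argument is precisely the standard one found in those references, so there is nothing to reconcile.
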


Throughout this article, $C$ always denotes a positive constant which is independent of
the main parameters involved, but may vary from line to line.

\section{Proofs of Theorems 1.1 and 1.2}

\begin{proof}[Proof of Theorem 1.1]
We only need to show the theorem for the homogeneous case because the proof of the non-homogeneous result is similar and so is omitted here. Let $f\in \dot K^{\alpha,p}_q(w_1,w_2)$. Following \cite{lu2}, for any $k\in\mathbb Z$, we decompose $f(x)$ as
\begin{equation*}
\begin{split}
f(x)&=f(x)\chi_{\{2^{k-2}<|x|\le 2^{k+1}\}}(x)+f(x)\chi_{\{|x|\le 2^{k-2}\}}(x)+f(x)\chi_{\{|x|>2^{k+1}\}}(x)\\
&=f_1(x)+f_2(x)+f_3(x).
\end{split}
\end{equation*}
Since $\mathcal S_\beta$ ($0<\beta\le1$) is a sublinear operator, then we can write
\begin{equation*}
\begin{split}
\big\|\mathcal S_\beta(f)\big\|^p_{\dot K^{\alpha,p}_q(w_1,w_2)}&=\sum_{k\in\mathbb Z}\big[w_1(B_k)\big]^{{\alpha p}/n}\big\|\mathcal S_\beta(f)\chi_k\big\|_{L^q_{w_2}}^p\\
&\le C\sum_{i=1}^3\sum_{k\in\mathbb Z}\big[w_1(B_k)\big]^{{\alpha p}/n}\big\|\mathcal S_\beta(f_i)\chi_k\big\|_{L^q_{w_2}}^p\\
&=I_1+I_2+I_3.
\end{split}
\end{equation*}
Since $w_2\in A_{q_2}$ and $1\le q_2\le q$, then $w_2\in A_q$. By Theorem A and Lemma 2.1, we have
\begin{equation*}
\begin{split}
I_1&\le C\sum_{k\in\mathbb Z}\big[w_1(B_k)\big]^{{\alpha p}/n}\big\|f_1\big\|_{L^q_{w_2}}^p\\
&\le C\sum_{k\in\mathbb Z}\big[w_1(B_k)\big]^{{\alpha p}/n}\big\|f\chi_k\big\|_{L^q_{w_2}}^p\\
&\le C\big\|f\big\|^p_{\dot K^{\alpha,p}_q(w_1,w_2)}.
\end{split}
\end{equation*}
For the term $I_2$, we first use Minkowski's inequality to derive
\begin{equation*}
I_2\le C\sum_{k\in\mathbb Z}\big[w_1(B_k)\big]^{{\alpha p}/n}\left(\sum_{\ell=-\infty}^{k-2}\big\|\mathcal S_\beta(f\chi_{\ell})\chi_k\big\|_{L^q_{w_2}}\right)^p.
\end{equation*}
For any $\varphi\in{\mathcal C}_\beta$, $0<\beta\le1$ and $(y,t)\in\Gamma(x)$, we have
\begin{align}
\big|(f\chi_{\ell})*\varphi_t(y)\big|&=\bigg|\int_{2^{\ell-1}<|z|\le2^{\ell}}\varphi_t(y-z)f(z)\,dz\bigg|\notag\\
&\le C\cdot t^{-n}\int_{\{2^{\ell-1}<|z|\le2^{\ell}\}\cap\{z:|y-z|\le t\}}|f(z)|\,dz.
\end{align}
For any $x\in C_k$, $(y,t)\in\Gamma(x)$ and $z\in\{2^{\ell-1}<|z|\le2^{\ell}\}\cap B(y,t)$ with $\ell\le k-2$, then by a direct computation, we can easily see that
\begin{equation*}
2t\ge |x-y|+|y-z|\ge|x-z|\ge|x|-|z|\ge \frac{|x|}{2}.
\end{equation*}
Thus, by using the above inequality (3.1) and Minkowski's inequality, we deduce
\begin{align}
\big|\mathcal S_\beta(f\chi_{\ell})(x)\big|&=\left(\iint_{\Gamma(x)}\Big(\sup_{\varphi\in{\mathcal C}_\beta}\big|(f\chi_{\ell})*\varphi_t(y)\big|\Big)^2\frac{dydt}{t^{n+1}}\right)^{1/2}\notag\\
&\le C\left(\int_{\frac{|x|}{4}}^\infty\int_{|x-y|<t}\bigg|t^{-n}
\int_{2^{\ell-1}<|z|\le2^{\ell}}|f(z)|\,dz\bigg|^2\frac{dydt}{t^{n+1}}\right)^{1/2}\notag\\
&\le C\bigg(\int_{2^{\ell-1}<|z|\le2^{\ell}}|f(z)|\,dz\bigg)
\left(\int_{\frac{|x|}{4}}^\infty\frac{dt}{t^{2n+1}}\right)^{1/2}\notag\\
&\le C\cdot\frac{1}{|x|^n}\bigg(\int_{2^{\ell-1}<|z|\le2^{\ell}}|f(z)|\,dz\bigg).
\end{align}
Denote the conjugate exponent of $q>1$ by $q'=q/{(q-1)}$. Applying H\"older's inequality and the $A_q$ condition, we can deduce that
\begin{align}
\int_{2^{\ell-1}<|z|\le2^{\ell}}|f(z)|\,dz&\le\bigg(\int_{2^{\ell-1}<|z|\le2^{\ell}}|f(z)|^qw_2(z)\,dz\bigg)^{1/q}
\bigg(\int_{2^{\ell-1}<|z|\le2^{\ell}}w_2(z)^{-{q'}/q}\,dz\bigg)^{1/{q'}}\notag\\
&\le C\cdot|B_{\ell}|\big[w_2(B_{\ell})\big]^{-1/q}\big\|f\chi_{\ell}\big\|_{L^q_{w_2}}.
\end{align}
Substituting the above inequality (3.3) into (3.2), we thus obtain
\begin{equation*}
\begin{split}
I_2&\le C\sum_{k\in\mathbb Z}\big[w_1(B_k)\big]^{{\alpha p}/n}\left(\sum_{\ell=-\infty}^{k-2}\bigg\{\int_{2^{k-1}<|x|\le 2^k}\big|\mathcal S_\beta(f\chi_{\ell})(x)\big|^qw_2(x)\,dx\bigg\}^{1/q}\right)^p\\
&\le C\sum_{k\in\mathbb Z}\big[w_1(B_k)\big]^{{\alpha p}/n}\left(\sum_{\ell=-\infty}^{k-2}|B_{\ell}|\big[w_2(B_{\ell})\big]^{-1/q}\big\|f\chi_{\ell}\big\|_{L^q_{w_2}}
\bigg\{\int_{2^{k-1}<|x|\le 2^k}\frac{w_2(x)}{|x|^{nq}}dx\bigg\}^{1/q}\right)^p\\
&\le C\sum_{k\in\mathbb Z}\big[w_1(B_k)\big]^{{\alpha p}/n}\left(\sum_{\ell=-\infty}^{k-2}\frac{|B_{\ell}|}{|B_k|}\cdot
\frac{[w_2(B_k)]^{1/q}}{[w_2(B_{\ell})]^{1/q}}\big\|f\chi_{\ell}\big\|_{L^q_{w_2}}\right)^p.
\end{split}
\end{equation*}
Here, we shall consider two cases. For the case of $0<p\le1$, using the well-known inequality $\left(\sum_{\ell}|a_\ell|\right)^p\le\sum_{\ell}|a_\ell|^p$ and changing the order of summation, we find that
\begin{equation*}
I_2\le C\sum_{\ell\in\mathbb Z}\big[w_1(B_{\ell})\big]^{{\alpha p}/n}\big\|f\chi_{\ell}\big\|^p_{L^q_{w_2}}
\left(\sum_{k=\ell+2}^\infty\frac{|B_{\ell}|^p}{|B_k|^p}\cdot
\frac{[w_2(B_k)]^{p/q}}{[w_2(B_{\ell})]^{p/q}}\cdot\frac{[w_1(B_k)]^{{\alpha p}/n}}{[w_1(B_{\ell})]^{{\alpha p}/n}}\right).
\end{equation*}
Moreover, it follows immediately from Lemma 2.1 that
\begin{equation*}
I_2\le C\sum_{\ell\in\mathbb Z}\big[w_1(B_{\ell})\big]^{{\alpha p}/n}\big\|f\chi_{\ell}\big\|^p_{L^q_{w_2}}
\left(\sum_{k=\ell+2}^\infty\frac{|B_{\ell}|^p}{|B_k|^p}\cdot
\frac{[w_2(B_k)]^{p/q}}{[w_2(B_{\ell+2})]^{p/q}}
\cdot\frac{[w_1(B_k)]^{{\alpha p}/n}}{[w_1(B_{\ell+2})]^{{\alpha p}/n}}\right).
\end{equation*}
Since $B_k\supseteq B_{\ell+2}$ when $k\ge \ell+2$ and $w_i\in A_{q_i}$ for $i=1, 2$. Then by Lemma 2.2, we can get
\begin{equation}
\frac{w_i(B_k)}{w_i(B_{\ell+2})}\le C\left(\frac{|B_k|}{|B_{\ell+2}|}\right)^{q_i},\quad \mbox{for $i=1$ and 2}.
\end{equation}
Therefore
\begin{equation*}
\begin{split}
I_2&\le C\sum_{\ell\in\mathbb Z}\big[w_1(B_{\ell})\big]^{{\alpha p}/n}\big\|f\chi_{\ell}\big\|^p_{L^q_{w_2}}
\left(\sum_{k=\ell+2}^\infty\left[\frac{|B_{\ell+2}|}{|B_k|}\right]^{p-{\alpha q_1p}/n-{q_2p}/q}\right)\\
&\le C\sum_{\ell\in\mathbb Z}\big[w_1(B_{\ell})\big]^{{\alpha p}/n}\big\|f\chi_{\ell}\big\|^p_{L^q_{w_2}}
\left(\sum_{k=0}^\infty2^{-kn(p-{\alpha q_1p}/n-{q_2p}/q)}\right)\\
&\le C\sum_{\ell\in\mathbb Z}\big[w_1(B_{\ell})\big]^{{\alpha p}/n}\big\|f\chi_{\ell}\big\|^p_{L^q_{w_2}},
\end{split}
\end{equation*}
where the last inequality holds since $\alpha q_1<n(1-{q_2}/q)$. On the other hand, for the case of $1<p<\infty$, we will use H\"older's inequality to obtain
\begin{equation*}
\begin{split}
&\left(\sum_{\ell=-\infty}^{k-2}\frac{|B_{\ell}|}{|B_k|}\cdot
\frac{[w_2(B_k)]^{1/q}}{[w_2(B_{\ell})]^{1/q}}\cdot\big[w_1(B_k)\big]^{\alpha/n}
\big\|f\chi_{\ell}\big\|_{L^q_{w_2}}\right)^p\\
\le &\left(\sum_{\ell=-\infty}^{k-2}\big[w_1(B_{\ell})\big]^{{\alpha p}/n}\big\|f\chi_{\ell}\big\|^p_{L^q_{w_2}}
\frac{|B_{\ell}|^{p/2}}{|B_k|^{p/2}}\cdot\frac{[w_2(B_k)]^{p/{2q}}}{[w_2(B_{\ell})]^{p/{2q}}}
\cdot\frac{[w_1(B_k)]^{{\alpha p}/{2n}}}{[w_1(B_{\ell})]^{{\alpha p}/{2n}}}\right)\\
&\times
\left(\sum_{\ell=-\infty}^{k-2}\frac{|B_{\ell}|^{{p'}/2}}{|B_k|^{{p'}/2}}
\cdot\frac{[w_2(B_k)]^{{p'}/{2q}}}{[w_2(B_{\ell})]^{{p'}/{2q}}}\cdot\frac{[w_1(B_k)]^{{\alpha p'}/{2n}}}{[w_1(B_{\ell})]^{{\alpha p'}/{2n}}}\right)^{p/{p'}}.
\end{split}
\end{equation*}
Using the same arguments as above, we can also prove the following estimates under the assumption that $\alpha q_1<n(1-{q_2}/q)$.
\begin{equation}
\sum_{k=\ell+2}^\infty\frac{|B_{\ell}|^{p/2}}{|B_k|^{p/2}}\cdot\frac{[w_2(B_k)]^{p/{2q}}}{[w_2(B_{\ell})]^{p/{2q}}}
\cdot\frac{[w_1(B_k)]^{{\alpha p}/{2n}}}{[w_1(B_{\ell})]^{{\alpha p}/{2n}}}\le C
\end{equation}
and
\begin{equation}
\sum_{\ell=-\infty}^{k-2}\frac{|B_{\ell}|^{{p'}/2}}{|B_k|^{{p'}/2}}
\cdot\frac{[w_2(B_k)]^{{p'}/{2q}}}{[w_2(B_{\ell})]^{{p'}/{2q}}}\cdot\frac{[w_1(B_k)]^{{\alpha p'}/{2n}}}{[w_1(B_{\ell})]^{{\alpha p'}/{2n}}}\le C.
\end{equation}
Hence
\begin{equation*}
\begin{split}
I_2&\le C\sum_{k\in\mathbb Z}\left(\sum_{\ell=-\infty}^{k-2}\big[w_1(B_{\ell})\big]^{{\alpha p}/n}\big\|f\chi_{\ell}\big\|^p_{L^q_{w_2}}\frac{|B_{\ell}|^{p/2}}{|B_k|^{p/2}}\cdot
\frac{[w_2(B_k)]^{p/{2q}}}{[w_2(B_{\ell})]^{p/{2q}}}
\cdot\frac{[w_1(B_k)]^{{\alpha p}/{2n}}}{[w_1(B_{\ell})]^{{\alpha p}/{2n}}}\right)\\
&\le C\sum_{\ell\in\mathbb Z}\big[w_1(B_{\ell})\big]^{{\alpha p}/n}\big\|f\chi_{\ell}\big\|^p_{L^q_{w_2}}
\left(\sum_{k=\ell+2}^\infty\frac{|B_{\ell}|^{p/2}}{|B_k|^{p/2}}
\cdot\frac{[w_2(B_k)]^{p/{2q}}}{[w_2(B_{\ell})]^{p/{2q}}}
\cdot\frac{[w_1(B_k)]^{{\alpha p}/{2n}}}{[w_1(B_{\ell})]^{{\alpha p}/{2n}}}\right)\\
&\le C\sum_{\ell\in\mathbb Z}\big[w_1(B_{\ell})\big]^{{\alpha p}/n}\big\|f\chi_{\ell}\big\|^p_{L^q_{w_2}}.
\end{split}
\end{equation*}
Summarizing the above estimates for the term $I_2$, we obtain that for every $0<p<\infty$,
\begin{equation*}
I_2\le C\sum_{\ell\in\mathbb Z}\big[w_1(B_{\ell})\big]^{{\alpha p}/n}\big\|f\chi_{\ell}\big\|^p_{L^q_{w_2}}\le C\big\|f\big\|^p_{\dot K^{\alpha,p}_q(w_1,w_2)}.
\end{equation*}
Let us now turn to estimate the last term $I_3$. In this case, for any $x\in C_k$, $(y,t)\in\Gamma(x)$ and $z\in\{2^{\ell-1}<|z|\le2^{\ell}\}\cap B(y,t)$ with $\ell\ge k+2$, it is easy to check that
\begin{equation*}
2t\ge |x-y|+|y-z|\ge|x-z|\ge|z|-|x|\ge \frac{|z|}{2}.
\end{equation*}
Then it follows from the inequality (3.1) and Minkowski's inequality that
\begin{align}
\big|\mathcal S_\beta(f\chi_{\ell})(x)\big|&\le C\left(\int_{\frac{|z|}{4}}^\infty\int_{|x-y|<t}\bigg|t^{-n}
\int_{2^{\ell-1}<|z|\le2^{\ell}}|f(z)|\,dz\bigg|^2\frac{dydt}{t^{n+1}}\right)^{1/2}\notag\\
&\le C\bigg(\int_{2^{\ell-1}<|z|\le2^{\ell}}|f(z)|\,dz\bigg)
\left(\int_{\frac{|z|}{4}}^\infty\frac{dt}{t^{2n+1}}\right)^{1/2}\notag\\
&\le C\bigg(\int_{2^{\ell-1}<|z|\le2^{\ell}}\frac{|f(z)|}{|z|^n}dz\bigg).
\end{align}
This estimate together with (3.3) implies
\begin{align}
\big|\mathcal S_\beta(f\chi_{\ell})(x)\big|&\le C\cdot\frac{1}{|B_{\ell}|}\bigg(\int_{2^{\ell-1}<|z|\le2^{\ell}}|f(z)|\,dz\bigg)\notag\\
&\le C\cdot\big[w_2(B_{\ell})\big]^{-1/q}\big\|f\chi_{\ell}\big\|_{L^q_{w_2}}.
\end{align}
Hence
\begin{equation*}
\begin{split}
I_3&\le C\sum_{k\in\mathbb Z}\big[w_1(B_k)\big]^{{\alpha p}/n}\left(\sum_{\ell={k+2}}^{\infty}\bigg\{\int_{2^{k-1}<|x|\le 2^k}\big|\mathcal S_\beta(f\chi_{\ell})(x)\big|^qw_2(x)\,dx\bigg\}^{1/q}\right)^p\\
&\le C\sum_{k\in\mathbb Z}\big[w_1(B_k)\big]^{{\alpha p}/n}\left(\sum_{\ell={k+2}}^{\infty}\big[w_2(B_{\ell})\big]^{-1/q}\big\|f\chi_{\ell}\big\|_{L^q_{w_2}}
\bigg\{\int_{2^{k-1}<|x|\le 2^k}w_2(x)\,dx\bigg\}^{1/q}\right)^p\\
&\le C\sum_{k\in\mathbb Z}\big[w_1(B_k)\big]^{{\alpha p}/n}\left(\sum_{\ell={k+2}}^{\infty}\frac{[w_2(B_{k})]^{1/q}}{[w_2(B_{\ell})]^{1/q}}
\big\|f\chi_{\ell}\big\|_{L^q_{w_2}}\right)^p.
\end{split}
\end{equation*}
Now we will consider the following two cases again. For the case of $0<p\le1$, by using the inequality $\left(\sum_{\ell}|a_\ell|\right)^p\le\sum_{\ell}|a_\ell|^p$ and changing the order of summation, we obtain
\begin{equation*}
\begin{split}
I_3&\le C\sum_{\ell\in\mathbb Z}\big[w_1(B_{\ell})\big]^{{\alpha p}/n}\big\|f\chi_{\ell}\big\|^p_{L^q_{w_2}}
\left(\sum_{k=-\infty}^{\ell-2}\frac{[w_2(B_k)]^{p/q}}{[w_2(B_{\ell})]^{p/q}}\cdot\frac{[w_1(B_k)]^{{\alpha p}/n}}{[w_1(B_{\ell})]^{{\alpha p}/n}}\right)\\
&\le C\sum_{\ell\in\mathbb Z}\big[w_1(B_{\ell})\big]^{{\alpha p}/n}\big\|f\chi_{\ell}\big\|^p_{L^q_{w_2}}
\left(\sum_{k=-\infty}^{\ell-2}\frac{[w_2(B_k)]^{p/q}}{[w_2(B_{\ell-2})]^{p/q}}\cdot\frac{[w_1(B_k)]^{{\alpha p}/n}}{[w_1(B_{\ell-2})]^{{\alpha p}/n}}\right).
\end{split}
\end{equation*}
Since $w_i\in A_{q_i}$, then there exist $r_i>1$ such that $w_i\in RH_{r_i}$ for $i=1, 2$. Thus by Lemma 2.2 again, we can get
\begin{equation}
\frac{w_i(B_k)}{w_i(B_{\ell-2})}\le C\left(\frac{|B_k|}{|B_{\ell-2}|}\right)^{\delta_i},\quad \mbox{for $i=1$ and 2},
\end{equation}
where $\delta_i={(r_i-1)}/{r_i}>0$. Therefore, we have
\begin{equation*}
\begin{split}
I_3&\le C\sum_{\ell\in\mathbb Z}\big[w_1(B_{\ell})\big]^{{\alpha p}/n}\big\|f\chi_{\ell}\big\|^p_{L^q_{w_2}}
\left(\sum_{k=-\infty}^{\ell-2}\left[\frac{|B_k|}{|B_{\ell-2}|}\right]^{{\alpha\delta_1p}/n+{\delta_2p}/q}\right)\\
&\le C\sum_{\ell\in\mathbb Z}\big[w_1(B_{\ell})\big]^{{\alpha p}/n}\big\|f\chi_{\ell}\big\|^p_{L^q_{w_2}}
\left(\sum_{k=-\infty}^0 2^{kn({\alpha\delta_1p}/n+{\delta_2p}/q)}\right)\\
&\le C\sum_{\ell\in\mathbb Z}\big[w_1(B_{\ell})\big]^{{\alpha p}/n}\big\|f\chi_{\ell}\big\|^p_{L^q_{w_2}},
\end{split}
\end{equation*}
where in the last inequality we have used the fact that ${\alpha\delta_1p}/n+{\delta_2p}/q>0$ under our assumption $(i)$ or $(ii)$. On the other hand, for the case of $1<p<\infty$, an application of H\"older's inequality gives us that
\begin{equation*}
\begin{split}
&\left(\sum_{\ell={k+2}}^{\infty}\frac{[w_2(B_{k})]^{1/q}}{[w_2(B_{\ell})]^{1/q}}\cdot\big[w_1(B_k)\big]^{\alpha/n}
\big\|f\chi_{\ell}\big\|_{L^q_{w_2}}\right)^p\\
\le &\left(\sum_{\ell={k+2}}^{\infty}\big[w_1(B_{\ell})\big]^{{\alpha p}/n}\big\|f\chi_{\ell}\big\|^p_{L^q_{w_2}}\cdot\frac{[w_2(B_k)]^{p/{2q}}}{[w_2(B_{\ell})]^{p/{2q}}}
\cdot\frac{[w_1(B_k)]^{{\alpha p}/{2n}}}{[w_1(B_{\ell})]^{{\alpha p}/{2n}}}\right)\\
&\times\left(\sum_{\ell={k+2}}^{\infty}\frac{[w_2(B_k)]^{{p'}/{2q}}}{[w_2(B_{\ell})]^{{p'}/{2q}}}
\cdot\frac{[w_1(B_k)]^{{\alpha p'}/{2n}}}{[w_1(B_{\ell})]^{{\alpha p'}/{2n}}}\right)^{p/{p'}}.
\end{split}
\end{equation*}
By using the same arguments as for $I_3$, we are able to prove that the following two series is bounded by an absolute constant under the assumption $(i)$ or $(ii)$.
\begin{equation}
\sum_{k=-\infty}^{\ell-2}\frac{[w_2(B_k)]^{p/{2q}}}{[w_2(B_{\ell})]^{p/{2q}}}
\cdot\frac{[w_1(B_k)]^{{\alpha p}/{2n}}}{[w_1(B_{\ell})]^{{\alpha p}/{2n}}}\le C
\end{equation}
and
\begin{equation}
\sum_{\ell={k+2}}^{\infty}\frac{[w_2(B_k)]^{{p'}/{2q}}}{[w_2(B_{\ell})]^{{p'}/{2q}}}\cdot\frac{[w_1(B_k)]^{{\alpha p'}/{2n}}}{[w_1(B_{\ell})]^{{\alpha p'}/{2n}}}\le C.
\end{equation}
Consequently
\begin{equation*}
\begin{split}
I_3&\le C\sum_{k\in\mathbb Z}\left(\sum_{\ell={k+2}}^{\infty}\big[w_1(B_{\ell})\big]^{{\alpha p}/n}\big\|f\chi_{\ell}\big\|^p_{L^q_{w_2}}\cdot\frac{[w_2(B_k)]^{p/{2q}}}{[w_2(B_{\ell})]^{p/{2q}}}
\cdot\frac{[w_1(B_k)]^{{\alpha p}/{2n}}}{[w_1(B_{\ell})]^{{\alpha p}/{2n}}}\right)\\
&\le C\sum_{\ell\in\mathbb Z}\big[w_1(B_{\ell})\big]^{{\alpha p}/n}\big\|f\chi_{\ell}\big\|^p_{L^q_{w_2}}
\left(\sum_{k=-\infty}^{\ell-2}\frac{[w_2(B_k)]^{p/{2q}}}{[w_2(B_{\ell})]^{p/{2q}}}
\cdot\frac{[w_1(B_k)]^{{\alpha p}/{2n}}}{[w_1(B_{\ell})]^{{\alpha p}/{2n}}}\right)\\
&\le C\sum_{\ell\in\mathbb Z}\big[w_1(B_{\ell})\big]^{{\alpha p}/n}\big\|f\chi_{\ell}\big\|^p_{L^q_{w_2}}.
\end{split}
\end{equation*}
From the above discussions for the term $I_3$, we know that for any $0<p<\infty$,
\begin{equation*}
I_3\le C\sum_{\ell\in\mathbb Z}\big[w_1(B_{\ell})\big]^{{\alpha p}/n}\big\|f\chi_{\ell}\big\|^p_{L^q_{w_2}}\le C\big\|f\big\|^p_{\dot K^{\alpha,p}_q(w_1,w_2)}.
\end{equation*}
Summing up the above estimates for $I_1$, $I_2$ and $I_3$, we complete the proof of Theorem 1.1.
\end{proof}

\begin{proof}[Proof of Theorem 1.2]
Let $f\in \dot K^{\alpha,p}_q(w_1,w_2)$. For any $k\in\mathbb Z$, as in the proof of Theorem 1.1, we will split $f(x)$ into three parts
\begin{equation*}
\begin{split}
f(x)&=f(x)\chi_{\{2^{k-2}<|x|\le 2^{k+1}\}}(x)+f(x)\chi_{\{|x|\le 2^{k-2}\}}(x)+f(x)\chi_{\{|x|>2^{k+1}\}}(x)\\
&=f_1(x)+f_2(x)+f_3(x).
\end{split}
\end{equation*}
Then for any given $\lambda>0$, we have
\begin{equation*}
\begin{split}
&\lambda^p\cdot\sum_{k\in\mathbb Z}\big[w_1(B_k)\big]^{{\alpha p}/n}w_2\Big(\Big\{x\in C_k:|\mathcal S_\beta(f)(x)|>\lambda\Big\}\Big)^{p/q}\\
\le\,&\sum_{i=1}^3\lambda^p\cdot\sum_{k\in\mathbb Z}\big[w_1(B_k)\big]^{{\alpha p}/n}
w_2\Big(\Big\{x\in C_k:|\mathcal S_\beta(f_i)(x)|>\lambda/3\Big\}\Big)^{p/q}\\
=\,&I'_1+I'_2+I'_3.
\end{split}
\end{equation*}
Applying Chebyshev's inequality, Theorem A and Lemma 2.1, we obtain
\begin{equation*}
\begin{split}
I'_1&\le\lambda^p\cdot\sum_{k\in\mathbb Z}\big[w_1(B_k)\big]^{{\alpha p}/n}\left(\frac{3^q}{\lambda^q}\big\|\mathcal S_\beta(f_1)\big\|^q_{L^q_{w_2}}\right)^{p/q}\\
&\le C\sum_{k\in\mathbb Z}\big[w_1(B_k)\big]^{{\alpha p}/n}\big\|f_1\big\|_{L^q_{w_2}}^p\\
&\le C\sum_{k\in\mathbb Z}\big[w_1(B_k)\big]^{{\alpha p}/n}\big\|f\chi_k\big\|_{L^q_{w_2}}^p\\
&\le C\big\|f\big\|^p_{\dot K^{\alpha,p}_q(w_1,w_2)}.
\end{split}
\end{equation*}
For any $x\in C_k$, it follows from the inequalities (3.2) and (3.3) that
\begin{equation*}
\begin{split}
\big|\mathcal S_\beta(f_2)(x)\big|&\le \sum_{\ell=-\infty}^{k-2}\big|\mathcal S_\beta(f\chi_{\ell})(x)\big|\\
&\le C \sum_{\ell=-\infty}^{k-2}\frac{1}{|x|^n}\bigg(\int_{2^{\ell-1}<|z|\le2^{\ell}}|f(z)|\,dz\bigg)\\
&\le C \sum_{\ell=-\infty}^{k-2}\frac{|B_{\ell}|}{|B_k|}
\big[w_2(B_{\ell})\big]^{-1/q}\big\|f\chi_{\ell}\big\|_{L^q_{w_2}}.
\end{split}
\end{equation*}
By using Lemma 2.1, the inequality (3.4) and the fact that $\alpha q_1=n(1-{q_2}/q)$, we deduce that
\begin{equation*}
\begin{split}
\big|\mathcal S_\beta(f_2)(x)\big|\le&\, C\cdot\frac{1}{[w_1(B_k)]^{\alpha/n}[w_2(B_k)]^{1/q}}\\
&\times\sum_{\ell=-\infty}^{k-2}\big[w_1(B_{\ell})\big]^{\alpha/n}
\big\|f\chi_{\ell}\big\|_{L^q_{w_2}}\cdot\frac{|B_{\ell}|}{|B_k|}
\cdot\frac{[w_2(B_k)]^{1/q}}{[w_2(B_{\ell+2})]^{1/q}}
\cdot\frac{[w_1(B_k)]^{\alpha/n}}{[w_1(B_{\ell+2})]^{\alpha/n}}\\
\le&\, C\cdot\frac{1}{[w_1(B_k)]^{\alpha/n}[w_2(B_k)]^{1/q}}\\
&\times\sum_{\ell=-\infty}^{k-2}\big[w_1(B_{\ell})\big]^{\alpha/n}
\big\|f\chi_{\ell}\big\|_{L^q_{w_2}}\cdot\left(\frac{|B_{\ell+2}|}{|B_k|}\right)^{1-{\alpha q_1}/n-{q_2}/q}\\
\le&\, C\cdot\frac{1}{[w_1(B_k)]^{\alpha/n}[w_2(B_k)]^{1/q}}
\sum_{\ell=-\infty}^{k-2}\big[w_1(B_{\ell})\big]^{{\alpha}/n}\big\|f\chi_{\ell}\big\|_{L^q_{w_2}}.
\end{split}
\end{equation*}
Moreover, since $0<p\le1$, then we have that for any $x\in C_k$,
\begin{align}
\big|\mathcal S_\beta(f_2)(x)\big|&\le C\cdot\frac{1}{[w_1(B_k)]^{\alpha/n}[w_2(B_k)]^{1/q}}
\left(\sum_{\ell=-\infty}^{k-2}\big[w_1(B_{\ell})\big]^{{\alpha p}/n}\big\|f\chi_{\ell}\big\|^p_{L^q_{w_2}}\right)^{1/p}\notag\\
&\le C\cdot\frac{1}{[w_1(B_k)]^{\alpha/n}[w_2(B_k)]^{1/q}}\big\|f\big\|_{\dot K^{\alpha,p}_q(w_1,w_2)}.
\end{align}
Set $A_k=[w_1(B_k)]^{-\alpha/n}[w_2(B_k)]^{-1/q}$. If $\big\{x\in C_k:|\mathcal S_\beta(f_2)(x)|>\lambda/3\big\}=\O$, then the inequality
\begin{equation*}
I'_2\le C\big\|f\big\|^p_{\dot K^{\alpha,p}_q(w_1,w_2)}
\end{equation*}
holds trivially. Now we suppose that $\big\{x\in C_k:|\mathcal S_\beta(f_2)(x)|>\lambda/3\big\}\neq\O$. First it is easy to verify that $\lim_{k\to\infty}A_k=0$. Then for any fixed $\lambda>0$, we are able to find a maximal positive integer $k_\lambda$ such that
\begin{equation}
\lambda/3\le C\cdot A_{k_\lambda}\big\|f\big\|_{\dot K^{\alpha,p}_q(w_1,w_2)}.
\end{equation}
Hence
\begin{equation*}
\begin{split}
I'_2&\le\lambda^p\sum_{k=-\infty}^{k_\lambda}[w_1(B_k)]^{{\alpha p}/n}[w_2(B_k)]^{p/q}\\
&\le C\big\|f\big\|^p_{\dot K^{\alpha,p}_q(w_1,w_2)}\sum_{k=-\infty}^{k_\lambda}
\frac{[w_1(B_k)]^{{\alpha p}/n}}{[w_1(B_{k_\lambda})]^{{\alpha p}/n}}\cdot\frac{[w_2(B_k)]^{p/q}}{[w_2(B_{k_\lambda})]^{p/q}}.
\end{split}
\end{equation*}
Because $B_k\subseteq B_{k_\lambda}$, then by Lemma 2.2 with the same notations $\delta_i$ as in (3.9), we can get
\begin{equation*}
\frac{w_i(B_k)}{w_i(B_{k_\lambda})}\le C\left(\frac{|B_k|}{|B_{k_\lambda}|}\right)^{\delta_i},\quad \mbox{for $i=1$ and 2}.
\end{equation*}
Therefore
\begin{equation*}
\begin{split}
I'_2&\le C\big\|f\big\|^p_{\dot K^{\alpha,p}_q(w_1,w_2)}\sum_{k=-\infty}^{k_\lambda}
\left(\frac{|B_k|}{|B_{k_\lambda}|}\right)^{{\alpha\delta_1p}/n+{\delta_2p}/q}\\
&\le C\big\|f\big\|^p_{\dot K^{\alpha,p}_q(w_1,w_2)}.
\end{split}
\end{equation*}
On the other hand, it follows from the inequalities (3.3) and (3.7) that
\begin{equation*}
\begin{split}
\big|\mathcal S_\beta(f_3)(x)\big|&\le \sum_{\ell={k+2}}^{\infty}\big|\mathcal S_\beta(f\chi_{\ell})(x)\big|\\
&\le C\sum_{\ell={k+2}}^{\infty}\int_{2^{\ell-1}<|z|\le2^{\ell}}\frac{|f(z)|}{|z|^n}dz\\
&\le C\sum_{\ell={k+2}}^{\infty}\big[w_2(B_{\ell})\big]^{-1/q}\big\|f\chi_{\ell}\big\|_{L^q_{w_2}}.
\end{split}
\end{equation*}
In the present situation, since $B_k\subseteq B_{\ell-2}$ with $\ell\ge k+2$, then it follows from the inequality (3.9) that
\begin{equation*}
\begin{split}
\big|\mathcal S_\beta(f_3)(x)\big|\le&\, C\cdot\frac{1}{[w_1(B_k)]^{\alpha/n}[w_2(B_k)]^{1/q}}\\
&\times\sum_{\ell={k+2}}^{\infty}
\big[w_1(B_{\ell})\big]^{\alpha/n}\big\|f\chi_{\ell}\big\|_{L^q_{w_2}}
\cdot\frac{[w_2(B_k)]^{1/q}}{[w_2(B_{\ell-2})]^{1/q}}
\cdot\frac{[w_1(B_k)]^{\alpha/n}}{[w_1(B_{\ell-2})]^{\alpha/n}}\\
\le&\, C\cdot\frac{1}{[w_1(B_k)]^{\alpha/n}[w_2(B_k)]^{1/q}}\\
&\times\sum_{\ell={k+2}}^{\infty}
\big[w_1(B_{\ell})\big]^{\alpha/n}\big\|f\chi_{\ell}\big\|_{L^q_{w_2}}
\cdot\left(\frac{|B_k|}{|B_{\ell-2}|}\right)^{{\alpha\delta_1}/n+{\delta_2}/q}\\
\le&\, C\cdot\frac{1}{[w_1(B_k)]^{\alpha/n}[w_2(B_k)]^{1/q}}\sum_{\ell={k+2}}^{\infty}
\big[w_1(B_{\ell})\big]^{\alpha/n}\big\|f\chi_{\ell}\big\|_{L^q_{w_2}}.
\end{split}
\end{equation*}
Furthermore, recall that $0<p\le1$, then for any $x\in C_k$, we have
\begin{equation*}
\begin{split}
\big|\mathcal S_\beta(f_3)(x)\big|&\le C\cdot\frac{1}{[w_1(B_k)]^{\alpha/n}[w_2(B_k)]^{1/q}}
\left(\sum_{\ell={k+2}}^{\infty}\big[w_1(B_{\ell})\big]^{{\alpha p}/n}\big\|f\chi_{\ell}\big\|^p_{L^q_{w_2}}\right)^{1/p}\\
&\le C\cdot\frac{1}{[w_1(B_k)]^{\alpha/n}[w_2(B_k)]^{1/q}}\big\|f\big\|_{\dot K^{\alpha,p}_q(w_1,w_2)}.
\end{split}
\end{equation*}
Repeating the arguments used for the term $I'_2$, we can also obtain
\begin{equation*}
I'_3\le C\big\|f\big\|^p_{\dot K^{\alpha,p}_q(w_1,w_2)}.
\end{equation*}
Combining the above estimates for $I'_1$, $I'_2$ and $I'_3$, and then taking the supremum over all $\lambda>0$, we finish the proof of Theorem 1.2.
\end{proof}

\section{Proofs of Theorems 1.3 and 1.4}

In order to prove the main theorems of this section, let us first establish the following results.

\newtheorem{prop}[theorem]{Proposition}

\begin{prop}
Let $0<\beta\le1$, $q=2$ and $w\in A_{q_2}$ with $1\le q_2\le q$. Then for any $j\in\mathbb Z_+$, we have
\begin{equation*}
\big\|\mathcal S_{\beta,2^j}(f)\big\|_{L^2_{w}}\le C\cdot2^{{jnq_2}/2}\big\|\mathcal S_\beta(f)\big\|_{L^2_{w}}.
\end{equation*}
\end{prop}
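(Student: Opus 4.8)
The plan is to pass to squared $L^2_w$ norms and exploit the fact that, after an application of Tonelli's theorem, the aperture enters only through the weighted measure of a ball whose radius is scaled by the aperture. First I would write, for a general aperture $\gamma>0$,
\[
\big\|\mathcal S_{\beta,\gamma}(f)\big\|_{L^2_w}^2=\int_{\mathbb R^n}\iint_{\Gamma_\gamma(x)}\big(A_\beta(f)(y,t)\big)^2\frac{dy\,dt}{t^{n+1}}\,w(x)\,dx.
\]
Since the integrand is nonnegative, Tonelli's theorem permits interchanging the $x$-integration with the $(y,t)$-integration. For fixed $(y,t)\in\mathbb R^{n+1}_+$, the condition $(y,t)\in\Gamma_\gamma(x)$ is equivalent to $|x-y|<\gamma t$, i.e. $x\in B(y,\gamma t)$. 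Hence the inner $x$-integral collapses to $w(B(y,\gamma t))$, and I obtain the identity
\[
\big\|\mathcal S_{\beta,\gamma}(f)\big\|_{L^2_w}^2=\iint_{\mathbb R^{n+1}_+}\big(A_\beta(f)(y,t)\big)^2\,w\big(B(y,\gamma t)\big)\,\frac{dy\,dt}{t^{n+1}}.
\]
In particular, taking $\gamma=1$ gives the same expression with $w(B(y,t))$ in place of $w(B(y,\gamma t))$, which is exactly $\|\mathcal S_\beta(f)\|_{L^2_w}^2$.

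The comparison between the two apertures then reduces entirely to comparing $w(B(y,2^jt))$ with $w(B(y,t))$. Because $B(y,2^jt)=2^jB(y,t)$ and $w\in A_{q_2}$, the dilation estimate in Lemma 2.1 (with $\lambda=2^j$ and exponent $nq_2$) yields
\[
w\big(B(y,2^jt)\big)=w\big(2^jB(y,t)\big)\le C\,(2^j)^{nq_2}\,w\big(B(y,t)\big),
\]
with $C$ independent of $y$, $t$ and $j$. Substituting this pointwise bound into the identity for $\gamma=2^j$ and pulling the constant out of the integral gives
\[
\big\|\mathcal S_{\beta,2^j}(f)\big\|_{L^2_w}^2\le C\,2^{jnq_2}\iint_{\mathbb R^{n+1}_+}\big(A_\beta(f)(y,t)\big)^2\,w\big(B(y,t)\big)\,\frac{dy\,dt}{t^{n+1}}=C\,2^{jnq_2}\big\|\mathcal S_\beta(f)\big\|_{L^2_w}^2.
\]
Taking square roots produces the claimed bound with the factor $2^{jnq_2/2}$.

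There is no serious obstacle here: the argument is essentially a Tonelli-plus-doubling computation, and the only points requiring care are the justification of the interchange of integration (handled by nonnegativity of the integrand) and the correct bookkeeping of the dilation exponent, namely that $w\in A_{q_2}$ forces the growth $w(2^jB)\lesssim 2^{jnq_2}w(B)$ rather than a smaller power. I would emphasize that the hypothesis $q=2$ is precisely what makes this work: it lets the squared norm be written as a single iterated integral to which Tonelli applies directly, so the whole argument stays at the $L^2$ level and needs no vector-valued or good-$\lambda$ machinery.
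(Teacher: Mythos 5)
Your argument is correct and is essentially identical to the paper's own proof: both pass to the squared $L^2_w$ norm, apply Tonelli to collapse the $x$-integral to $w(B(y,2^jt))$, invoke the dilation estimate of Lemma 2.1 to compare with $w(B(y,t))$, and take square roots. No issues.
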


\begin{proof}
Since $w\in A_{q_2}$, then by Lemma 2.1, we know that for any $(y,t)\in{\mathbb R}^{n+1}_+$,
\begin{equation*}
w\big(B(y,2^jt)\big)=w\big(2^jB(y,t)\big)\le C\cdot2^{jnq_2}w\big(B(y,t)\big) \quad j=1,2,\ldots.
\end{equation*}
Therefore
\begin{equation*}
\begin{split}
\big\|\mathcal S_{\beta,2^j}(f)\big\|_{L^2_{w}}^2&=\int_{\mathbb R^n}\bigg(\iint_{{\mathbb R}^{n+1}_+}\Big(A_\beta(f)(y,t)\Big)^2\chi_{|x-y|<2^j t}\frac{dydt}{t^{n+1}}\bigg)w(x)\,dx\\
&=\iint_{{\mathbb R}^{n+1}_+}\bigg(\int_{|x-y|<2^j t}w(x)\,dx\bigg)\Big(A_\beta(f)(y,t)\Big)^2\frac{dydt}{t^{n+1}}\\
&\le C\cdot2^{jnq_2}\iint_{{\mathbb R}^{n+1}_+}\bigg(\int_{|x-y|<t}w(x)\,dx\bigg)\Big(A_\beta(f)(y,t)\Big)^2\frac{dydt}{t^{n+1}}\\
&=C\cdot 2^{jnq_2}\big\|\mathcal S_\beta(f)\big\|_{L^2_{w}}^2.
\end{split}
\end{equation*}
Taking square-roots on both sides of the above inequality, we are done.
\end{proof}

\begin{prop}
Let $0<\beta\le1$, $2<q<\infty$ and $w\in A_{q_2}$ with $1\le q_2\le q$. Then for any $j\in\mathbb Z_+$, we have
\begin{equation*}
\big\|\mathcal S_{\beta,2^j}(f)\big\|_{L^q_{w}}\le C\cdot2^{{jnq_2}/2}\big\|\mathcal S_\beta(f)\big\|_{L^q_{w}}.
\end{equation*}
\end{prop}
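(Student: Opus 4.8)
The plan is to run the Fefferman--Stein argument for changing the aperture of an area integral, now in the weighted $L^q$ setting. Since $q>2$, the direct application of Tonelli used in Proposition 4.1 no longer isolates the aperture factor, because the $L^q$ norm raises the square function to the power $q/2$ inside the integral. So I would argue by duality on $L^{q/2}_w$. Writing $\big\|\mathcal S_{\beta,2^j}(f)\big\|_{L^q_w}^2=\big\|(\mathcal S_{\beta,2^j}(f))^2\big\|_{L^{q/2}_w}$ and using $q/2>1$, I express this norm as
\[
\big\|(\mathcal S_{\beta,2^j}(f))^2\big\|_{L^{q/2}_w}=\sup\int_{\mathbb R^n}\big(\mathcal S_{\beta,2^j}(f)(x)\big)^2h(x)w(x)\,dx,
\]
where the supremum is taken over all nonnegative $h$ with $\|h\|_{L^{(q/2)'}_w}\le1$. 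Expanding the definition of $\mathcal S_{\beta,2^j}$ and applying Tonelli (every integrand is nonnegative) converts each such integral into
\[
\iint_{\mathbb R^{n+1}_+}\big(A_\beta(f)(y,t)\big)^2\bigg(\int_{|x-y|<2^jt}h(x)w(x)\,dx\bigg)\frac{dydt}{t^{n+1}}.
\]

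The heart of the matter is an averaging estimate that trades the large aperture ball $B(y,2^jt)$ for the cone region $B(y,t)$ at the cost of a factor $2^{jnq_2}$. Introduce the weighted maximal operator $M_wh(\xi)=\sup_{B\ni\xi}\frac1{w(B)}\int_B|h(x)|w(x)\,dx$. For any $\xi\in B(y,t)$ one has $B(y,2^jt)\subseteq B(\xi,2^{j+1}t)$, hence
\[
\int_{B(y,2^jt)}h(x)w(x)\,dx\le\int_{B(\xi,2^{j+1}t)}h(x)w(x)\,dx\le M_wh(\xi)\,w\big(B(\xi,2^{j+1}t)\big).
\]
Applying Lemma 2.1 twice (to pull out $w(B(\xi,2^{j+1}t))\le C\,2^{jnq_2}w(B(\xi,t))$ and to compare $w(B(\xi,t))\le w(B(y,2t))\le C\,w(B(y,t))$) gives, for every $\xi\in B(y,t)$,
\[
\int_{B(y,2^jt)}h(x)w(x)\,dx\le C\cdot2^{jnq_2}M_wh(\xi)\,w\big(B(y,t)\big).
\]
Multiplying by $w(\xi)$, integrating over $\xi\in B(y,t)$, and dividing by $w(B(y,t))$ yields the clean bound $\int_{B(y,2^jt)}hw\le C\,2^{jnq_2}\int_{|x-\,y|<t}M_wh\cdot w$.

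Substituting this back and applying Tonelli once more (reversing the order of integration) produces
\[
\int_{\mathbb R^n}\big(\mathcal S_{\beta,2^j}(f)(x)\big)^2h(x)w(x)\,dx\le C\cdot2^{jnq_2}\int_{\mathbb R^n}\big(\mathcal S_\beta(f)(x)\big)^2M_wh(x)w(x)\,dx.
\]
Hölder's inequality with the exponents $q/2$ and $(q/2)'$ bounds the right-hand side by $C\,2^{jnq_2}\big\|(\mathcal S_\beta(f))^2\big\|_{L^{q/2}_w}\|M_wh\|_{L^{(q/2)'}_w}=C\,2^{jnq_2}\big\|\mathcal S_\beta(f)\big\|_{L^q_w}^2\|M_wh\|_{L^{(q/2)'}_w}$. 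Since $w\in A_{q_2}$, the measure $w\,dx$ is doubling by Lemma 2.1, so the Hardy--Littlewood maximal operator $M_w$ is bounded on $L^{(q/2)'}_w$ for $(q/2)'\in(1,\infty)$; thus $\|M_wh\|_{L^{(q/2)'}_w}\le C\|h\|_{L^{(q/2)'}_w}\le C$. Taking the supremum over $h$ and then square roots gives $\big\|\mathcal S_{\beta,2^j}(f)\big\|_{L^q_w}\le C\,2^{jnq_2/2}\big\|\mathcal S_\beta(f)\big\|_{L^q_w}$.

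The step I expect to be the main obstacle is the averaging estimate of the second paragraph: one must carefully organize the inclusion $B(y,2^jt)\subseteq B(\xi,2^{j+1}t)$, the two doubling applications, and the averaging over the small ball so that the aperture factor emerges as exactly $2^{jnq_2}$ (hence $2^{jnq_2/2}$ after the square root), uniformly in $(y,t)$. The only other point requiring care is invoking the $L^{(q/2)'}_w$-boundedness of $M_w$, which is legitimate precisely because $A_{q_2}\subseteq A_\infty$ makes $w\,dx$ doubling.
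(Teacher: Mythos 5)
Your proposal is correct and follows essentially the same route as the paper's proof of Proposition 4.2: dualize $\big\|(\mathcal S_{\beta,2^j}f)^2\big\|_{L^{q/2}_w}$ against $L^{(q/2)'}_w$, apply Tonelli, dominate $\int_{|x-y|<2^jt}hw$ by $C\,2^{jnq_2}\int_{|x-y|<t}M_w(h)\,w$ via doubling and the weighted maximal function, and finish with H\"older and the $L^{(q/2)'}_w$-boundedness of $M_w$. Your averaging over $\xi\in B(y,t)$ is just a slight repackaging of the paper's use of $\inf_{x\in B(y,2^jt)}M_w(b)(x)$, so no substantive difference.
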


\begin{proof}
For any $j\in\mathbb Z_+$ and $0<\beta\le1$, it is easy to see that
\begin{equation}
\big\|\mathcal S_{\beta,2^j}(f)\big\|^2_{L^q_w}=\big\|\mathcal S_{\beta,2^j}(f)^2\big\|_{L^{q/2}_w}.
\end{equation}
Since $q/2>1$, then by duality, we have
\begin{align}
&\big\|\mathcal S_{\beta,2^j}(f)^2\big\|_{L^{q/2}_w}\notag\\
=&\sup_{\|b\|_{L_w^{(q/2)'}}\le1}\left|\int_{\mathbb R^n}\mathcal S_{\beta,2^j}(f)(x)^2b(x)w(x)\,dx\right|\notag\\
=&\sup_{\|b\|_{L_w^{(q/2)'}}\le1}\left|\int_{\mathbb R^n}\bigg(\iint_{{\mathbb R}^{n+1}_+}\Big(A_\beta(f)(y,t)\Big)^2\chi_{|x-y|<2^j t}\frac{dydt}{t^{n+1}}\bigg)b(x)w(x)\,dx\right|\notag\\
=&\sup_{\|b\|_{L_w^{(q/2)'}}\le1}\left|\iint_{{\mathbb R}^{n+1}_+}\bigg(\int_{|x-y|<2^jt}b(x)w(x)\,dx\bigg)\Big(A_\beta(f)(y,t)\Big)^2 \frac{dydt}{t^{n+1}}\right|.
\end{align}
For $w\in A_{q_2}$, we denote the weighted maximal operator by $M_w$; that is
\begin{equation*}
M_w(f)(x)=\underset{x\in B}{\sup}\frac{1}{w(B)}\int_B|f(y)|w(y)\,dy,
\end{equation*}
where the supremum is taken over all balls $B$ which contain $x$. Then, by Lemma 2.1, we can get
\begin{align}
\int_{|x-y|<2^jt}b(x)w(x)\,dx&\le C\cdot2^{jnq_2}w\big(B(y,t)\big)\cdot\frac{1}{w(B(y,2^jt))}\int_{B(y,2^jt)}b(x)w(x)\,dx\notag\\
&\le C\cdot2^{jnq_2}w\big(B(y,t)\big)\underset{x\in B(y,2^jt)}{\inf}M_w(b)(x)\notag\\
&\le C\cdot2^{jnq_2}\int_{|x-y|<t}M_w(b)(x)w(x)\,dx.
\end{align}
Substituting the above inequality (4.3) into (4.2) and then using H\"older's inequality together with the $L^{(q/2)'}_w$ boundedness of $M_w$, we thus obtain
\begin{equation*}
\begin{split}
\big\|\mathcal S_{\beta,2^j}(f)^2\big\|_{L^{q/2}_w}&\le C\cdot2^{jnq_2}
\sup_{\|b\|_{L_w^{(q/2)'}}\le1}\left|\int_{\mathbb R^n}\mathcal S_\beta(f)(x)^2M_w(b)(x)w(x)\,dx\right|\\
&\le C\cdot2^{jnq_2}\big\|\mathcal S_\beta(f)^2\big\|_{L^{q/2}_w}
\sup_{\|b\|_{L_w^{(q/2)'}}\le1}\big\|M_w(b)\big\|_{L^{(q/2)'}_w}\\
&\le C\cdot2^{jnq_2}\big\|\mathcal S_\beta(f)^2\big\|_{L^{q/2}_w}\\
&  = C\cdot2^{jnq_2}\big\|\mathcal S_\beta(f)\big\|^2_{L^q_w}.
\end{split}
\end{equation*}
This estimate together with (4.1) implies the desired result.
\end{proof}

\begin{prop}
Let $0<\beta\le1$, $1<q<2$ and $w\in A_{q_2}$ with $1\le q_2\le q$. Then for any $j\in\mathbb Z_+$, we have
\begin{equation*}
\big\|\mathcal S_{\beta,2^j}(f)\big\|_{L^q_w}\le C\cdot2^{{jnq_2}/q}\big\|\mathcal S_\beta(f)\big\|_{L^q_w}.
\end{equation*}
\end{prop}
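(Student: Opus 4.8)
The plan is to imitate the structure of the proof of Proposition 4.2 as far as the exponent $q/2$ permits, and then to supply a new argument for the range where that structure breaks down. As there, set $G(y,t)=\big(A_\beta(f)(y,t)\big)^2\ge0$ and introduce the positive linear ``cone operators''
\[
T_\gamma G(x)=\iint_{\Gamma_\gamma(x)}G(y,t)\,\frac{dy\,dt}{t^{n+1}},
\]
so that $\mathcal S_{\beta,2^j}(f)(x)^2=T_{2^j}G(x)$ and $\mathcal S_\beta(f)(x)^2=T_1G(x)$. Writing $s=q/2$, and using $\|\mathcal S_{\beta,2^j}(f)\|_{L^q_w}^2=\|T_{2^j}G\|_{L^s_w}$, the asserted estimate is equivalent to the linear bound
\[
\big\|T_{2^j}G\big\|_{L^s_w}\le C\cdot 2^{jnq_2/s}\big\|T_1G\big\|_{L^s_w}\qquad(G\ge0).
\]
Because $1<q<2$ we now have $s=q/2\in(1/2,1)$, and this is precisely the point at which the argument of Proposition 4.2 fails: the space $L^s_w$ is not normed, its dual is trivial, and the weighted maximal operator $M_w$ is not bounded on $L^{s'}_w$; thus the duality step used there is unavailable.

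To prove the displayed $L^s_w$ inequality for $s<1$ I would pass to the distribution function and use a weighted good-$\lambda$ comparison between $T_{2^j}G$ and $T_1G$. By Cavalieri's formula, valid for every exponent and in particular for $s<1$,
\[
\big\|T_{2^j}G\big\|_{L^s_w}^s=s\int_0^\infty\lambda^{s-1}\,w\big(\{x:T_{2^j}G(x)>\lambda\}\big)\,d\lambda,
\]
so it suffices to dominate the weighted distribution function of $T_{2^j}G$ by that of $T_1G$ at the cost of a factor $2^{jnq_2}$. The key geometric input is that if $(y,t)\in\Gamma_{2^j}(x)$ then $(y,t)\in\Gamma_1(x')$ for every $x'\in B(y,t)$, and all such $x'$ lie in $B(x,2^{j+1}t)$; passing from the base ball $B(y,t)$ to $B(y,2^jt)$ is exactly where the doubling Lemma 2.1 produces the weighted factor $w\big(B(y,2^jt)\big)\le C2^{jnq_2}w\big(B(y,t)\big)$, just as in Propositions 4.1 and 4.2. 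Over a Whitney decomposition of a level set $\{T_1G>\gamma\lambda\}$ one compares cone masses locally, using the $L^2_w$ Fubini identity already exploited for Proposition 4.1, while Lemma 2.2 (the reverse-H\"older/$A_\infty$ property of $w\in A_{q_2}$) converts the resulting Lebesgue-measure smallness of the exceptional sets into weighted-measure smallness. Inserting this good-$\lambda$ inequality into Cavalieri's formula and optimizing in $\gamma$ yields the factor $2^{jnq_2/s}=2^{2jnq_2/q}$ on the $L^s_w$ quasi-norm, equivalently $2^{jnq_2/q}$ on $\|\mathcal S_{\beta,2^j}(f)\|_{L^q_w}$.

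The main obstacle is genuinely the sub-unit exponent $s=q/2<1$. One cannot reduce matters to a pointwise maximal-function bound: the inequality $T_{2^j}G\lesssim 2^{jn}M(T_1G)$ (with $M$ the Hardy--Littlewood maximal function) is \emph{false}, since the mass of $G$ inside the wide cone $\Gamma_{2^j}(x)$ may be distributed across arbitrarily many scales $t$, so that no single ball centred near $x$ averages all of it. Consequently the comparison must be carried out globally, at the level of distribution functions, with careful multi-scale bookkeeping of the cones. Finally, that the exponent $q_2/q$ here is strictly worse than the exponent $q_2/2$ valid for $q\ge2$ is unavoidable rather than an artifact of the method: taking $G$ to be a single narrow bump concentrated near one point $(y_0,t_0)$ gives $\mathcal S_{\beta,2^j}(f)\approx\chi_{B(y_0,2^jt_0)}$ and $\mathcal S_\beta(f)\approx\chi_{B(y_0,t_0)}$, whose $L^q_w$ norms already differ by the factor $2^{jnq_2/q}$.
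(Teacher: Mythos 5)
Your overall strategy is the right one, and it is essentially the strategy of the paper (which follows Torchinsky): one compares the weighted distribution functions of $\mathcal S_{\beta,2^j}(f)$ and $\mathcal S_\beta(f)$ rather than seeking a pointwise or duality bound, and your diagnosis of why the Proposition 4.2 argument breaks down for $s=q/2<1$ is accurate. But the entire content of the proof lies in the comparison inequality you merely assert, and as sketched it has a genuine gap. The paper's version of your ``good-$\lambda$ inequality'' is the local $L^2$ estimate
\begin{equation*}
\int_{\mathbb R^n\backslash\Omega^*_\lambda}\mathcal S_{\beta,2^j}(f)(x)^2w(x)\,dx\le C\cdot2^{jnq_2}\int_{\mathbb R^n\backslash\Omega_\lambda}\mathcal S_{\beta}(f)(x)^2w(x)\,dx,
\end{equation*}
where $\Omega_\lambda=\{\mathcal S_\beta(f)>\lambda\}$ and $\Omega^*_\lambda=\{M_w(\chi_{\Omega_\lambda})>2^{-(jnq_2+1)}[w]_{A_{q_2}}^{-1}\}$ is an enlargement built directly from the \emph{weighted} maximal operator $M_w$. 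Proving it requires two specific facts you do not supply: the cone containment $\Gamma_{2^j}(\mathbb R^n\backslash\Omega^*_\lambda)\subseteq\Gamma(\mathbb R^n\backslash\Omega_\lambda)$, and the lower bound $w(B(y,t))\le 2\,w\big(B(y,t)\cap(\mathbb R^n\backslash\Omega_\lambda)\big)$ for $(y,t)$ in the wide cone over the complement; both come from the choice of the $2^{-jnq_2}$ threshold in the definition of $\Omega^*_\lambda$ together with Lemma 2.1 and the Fubini identity of Proposition 4.1. Your proposed route --- Whitney decomposition, Lebesgue-measure smallness of exceptional sets, then conversion to $w$-measure via Lemma 2.2 --- would not reproduce the factor $2^{jnq_2}$: the reverse-H\"older conversion $w(E)/w(B)\lesssim(|E|/|B|)^{(r-1)/r}$ carries an exponent $\delta=(r-1)/r<1$ that degrades the power of $2^j$, which is why the paper stays entirely in the weighted measure and uses the weighted weak $(1,1)$ bound for $M_w$ to control $w(\Omega^*_\lambda)\lesssim 2^{jnq_2}w(\Omega_\lambda)$.

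A second, related omission: nowhere in your sketch does the hypothesis $1<q<2$ actually get used, yet it is essential. In the paper it enters at the very last step, where the complement term is handled by Chebyshev at exponent $2$ and one integrates
\begin{equation*}
\int_{\mathcal S_\beta(f)(x)}^\infty q\lambda^{q-3}\,d\lambda=\frac{q}{2-q}\,\mathcal S_\beta(f)(x)^{q-2},
\end{equation*}
which converges precisely because $q<2$; there is no ``optimization in $\gamma$'' of a genuine good-$\lambda$ inequality. Any correct completion of your argument must make the role of $q<2$ explicit in this way. Your closing remarks (the failure of the pointwise bound $T_{2^j}G\lesssim 2^{jn}M(T_1G)$, and the single-bump example showing the exponent $q_2/q$ cannot be improved at the level of the linearized operators $T_\gamma$) are correct and are a nice addition not present in the paper, with the caveat that the bump example lives at the level of arbitrary $G\ge0$ rather than of $G=(A_\beta f)^2$.
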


\begin{proof}
We will adopt the same method given in \cite{torchinsky}. For any $j\in\mathbb Z_+$, set $\Omega_\lambda=\big\{x\in\mathbb R^n:\mathcal S_\beta(f)(x)>\lambda\big\}$ and $\Omega_{\lambda,j}=\big\{x\in\mathbb R^n:\mathcal S_{\beta,2^j}(f)(x)>\lambda\big\}.$ We also set
\begin{equation*}
\Omega^*_\lambda=\Big\{x\in\mathbb R^n:M_w(\chi_{\Omega_\lambda})(x)>\frac{1}{2^{(jnq_2+1)}\cdot[w]_{A_{q_2}}}\Big\}.
\end{equation*}
Observe that $w\big(\Omega_{\lambda,j}\big)\le w\big(\Omega^*_\lambda\big)+w\big(\Omega_{\lambda,j}\cap(\mathbb R^n\backslash\Omega^*_\lambda)\big)$. Thus, for any $j\in\mathbb Z_+$,
\begin{equation*}
\begin{split}
\big\|\mathcal S_{\beta,2^j}(f)\big\|^q_{L^q_w}&=\int_0^\infty q\lambda^{q-1}w\big(\Omega_{\lambda,j}\big)\,d\lambda\\
&\le\int_0^\infty q\lambda^{q-1}w\big(\Omega^*_\lambda\big)\,d\lambda+\int_0^\infty q\lambda^{q-1}w\big(\Omega_{\lambda,j}\cap(\mathbb R^n\backslash\Omega^*_\lambda)\big)\,d\lambda\\
&=\mbox{\upshape I+II}.
\end{split}
\end{equation*}
The weighted weak type estimate of $M_w$ yields
\begin{equation}
\mbox{\upshape I}\le C\cdot2^{jnq_2}\int_0^\infty q\lambda^{q-1}w(\Omega_\lambda)\,d\lambda\le C\cdot2^{jnq_2}\big\|\mathcal S_\beta(f)\big\|^q_{L^q_w}.
\end{equation}
To estimate II, we now claim that the following inequality holds.
\begin{equation}
\int_{\mathbb R^n\backslash\Omega^*_\lambda}\mathcal S_{\beta,2^j}(f)(x)^2w(x)\,dx\le C\cdot2^{jnq_2}\int_{\mathbb R^n\backslash\Omega_\lambda}\mathcal S_{\beta}(f)(x)^2w(x)\,dx.
\end{equation}
Assuming the claim for the moment, then it follows from Chebyshev's inequality and the inequality (4.5) that
\begin{equation*}
\begin{split}
w\big(\Omega_{\lambda,j}\cap(\mathbb R^n\backslash\Omega^*_\lambda)\big)&\le\lambda^{-2}\int_{\Omega_{\lambda,j}\cap(\mathbb R^n\backslash\Omega^*_\lambda)}\mathcal S_{\beta,2^j}(f)(x)^2w(x)\,dx\\
&\le\lambda^{-2}\int_{\mathbb R^n\backslash\Omega^*_\lambda}\mathcal S_{\beta,2^j}(f)(x)^2w(x)\,dx\\
&\le C\cdot2^{jnq_2}\lambda^{-2}\int_{\mathbb R^n\backslash\Omega_\lambda}\mathcal S_{\beta}(f)(x)^2w(x)\,dx.
\end{split}
\end{equation*}
Hence
\begin{equation*}
\mbox{\upshape II}\le C\cdot2^{jnq_2}\int_0^\infty q\lambda^{q-1}\bigg(\lambda^{-2}\int_{\mathbb R^n\backslash\Omega_\lambda}\mathcal S_{\beta}(f)(x)^2w(x)\,dx\bigg)d\lambda.
\end{equation*}
Changing the order of integration yields
\begin{align}
\mbox{\upshape II}&\le C\cdot2^{jnq_2}\int_{\mathbb R^n}\mathcal S_\beta(f)(x)^2\bigg(\int_{|\mathcal S_\beta(f)(x)|}^\infty q\lambda^{q-3}\,d\lambda\bigg)w(x)\,dx\notag\\
&\le C\cdot2^{jnq_2}\frac{q}{2-q}\cdot\big\|\mathcal S_\beta(f)\big\|^q_{L^q_w}.
\end{align}
Combining the above estimate (4.6) with (4.4) and taking $q$-th root on both sides, we are done. So it remains to prove the inequality (4.5). Set $\Gamma_{2^j}(\mathbb R^n\backslash\Omega^*_\lambda)=\underset{x\in\mathbb R^n\backslash\Omega^*_\lambda}{\bigcup}\Gamma_{2^j}(x)$ and
$\Gamma(\mathbb R^n\backslash\Omega_\lambda)=\underset{x\in\mathbb R^n\backslash\Omega_\lambda}{\bigcup}\Gamma(x).$
For each given $(y,t)\in\Gamma_{2^j}(\mathbb R^n\backslash\Omega^*_\lambda)$, by Lemma 2.1, we thus have
\begin{equation*}
w\big(B(y,2^jt)\cap(\mathbb R^n\backslash\Omega_\lambda^*)\big)\le C\cdot2^{jnq_2}w\big(B(y,t)\big).
\end{equation*}
It is not difficult to check that $w\big(B(y,t)\cap\Omega_\lambda\big)\le\frac{w(B(y,t))}{2}$ and $\Gamma_{2^j}(\mathbb R^n\backslash\Omega^*_\lambda)\subseteq\Gamma(\mathbb R^n\backslash\Omega_\lambda)$. In fact, for any $(y,t)\in\Gamma_{2^j}(\mathbb R^n\backslash\Omega^*_\lambda)$, there exists a point $x\in \mathbb R^n\backslash\Omega^*_\lambda$ such that $(y,t)\in\Gamma_{2^j}(x)$. Then we can deduce
\begin{equation*}
\begin{split}
w\big(B(y,t)\cap\Omega_\lambda\big)&\le w\big(B(y,2^jt)\cap\Omega_\lambda\big)\\
&= \int_{B(y,2^jt)}\chi_{\Omega_\lambda}(z)w(z)\,dz\\
&\le [w]_{A_{q_2}}\cdot2^{jnq_2}w\big(B(y,t)\big)\cdot
\frac{1}{w(B(y,2^jt))}\int_{B(y,2^jt)}\chi_{\Omega_\lambda}(z)w(z)\,dz.
\end{split}
\end{equation*}
Note that $x\in B(y,2^jt)\cap(\mathbb R^n\backslash\Omega^*_\lambda)$. So we have
\begin{equation*}
\begin{split}
w\big(B(y,t)\cap\Omega_\lambda\big)\le [w]_{A_{q_2}}\cdot2^{jnq_2}w\big(B(y,t)\big)
M_w(\chi_{\Omega_\lambda})(x)\le \frac{w(B(y,t))}{2}.
\end{split}
\end{equation*}
Hence
\begin{equation*}
\begin{split}
w\big(B(y,t)\big)&=w\big(B(y,t)\cap\Omega_\lambda\big)+w\big(B(y,t)\cap(\mathbb R^n\backslash\Omega_\lambda)\big)\\
&\le \frac{w(B(y,t))}{2}+w\big(B(y,t)\cap(\mathbb R^n\backslash\Omega_\lambda)\big),
\end{split}
\end{equation*}
which is equivalent to
\begin{equation*}
w\big(B(y,t)\big)\le 2\cdot w\big(B(y,t)\cap(\mathbb R^n\backslash\Omega_\lambda)\big).
\end{equation*}
The above inequality implies in particular that there is a point $z\in B(y,t)\cap(\mathbb R^n\backslash\Omega_\lambda)\neq\emptyset$. In this case, we have $(y,t)\in\Gamma(z)$ with $z\in \mathbb R^n\backslash\Omega_\lambda$, which implies $\Gamma_{2^j}(\mathbb R^n\backslash\Omega^*_\lambda)\subseteq\Gamma(\mathbb R^n\backslash\Omega_\lambda)$. Thus we obtain
\begin{equation*}
w\big(B(y,2^jt)\cap(\mathbb R^n\backslash\Omega_\lambda^*)\big)\le C\cdot2^{jnq_2}w\big(B(y,t)\cap(\mathbb R^n\backslash\Omega_\lambda)\big).
\end{equation*}
Therefore
\begin{equation*}
\begin{split}
&\int_{\mathbb R^n\backslash\Omega^*_\lambda}\mathcal S_{\beta,2^j}(f)(x)^2w(x)\,dx\\
=&\int_{\mathbb R^n\backslash\Omega^*_\lambda}\bigg(\iint_{\Gamma_{2^j}(x)}\Big(A_\beta(f)(y,t)\Big)^2\frac{dydt}{t^{n+1}}
\bigg)w(x)\,dx\\
\le&\iint_{\Gamma_{2^j}(\mathbb R^n\backslash\Omega^*_\lambda)}\bigg(\int_{B(y,2^jt)\cap(\mathbb R^n\backslash\Omega_\lambda^*)}w(x)\,dx\bigg)\Big(A_\beta(f)(y,t)\Big)^2\frac{dydt}{t^{n+1}}\\
\le&\,C\cdot2^{jnq_2}\iint_{\Gamma(\mathbb R^n\backslash\Omega_\lambda)}\bigg(\int_{B(y,t)\cap(\mathbb R^n\backslash\Omega_{\lambda})}w(x)\,dx\bigg)\Big(A_\beta(f)(y,t)\Big)^2\frac{dydt}{t^{n+1}}\\
\le&\,C\cdot2^{jnq_2}\int_{\mathbb R^n\backslash\Omega_\lambda}\mathcal S_{\beta}(f)(x)^2w(x)\,dx,
\end{split}
\end{equation*}
which is exactly what we want. This completes the proof of Proposition 4.3.
\end{proof}

We are now in a position to give the proofs of the main theorems.

\begin{proof}[Proof of Theorem 1.3]
From the definition of $\mathcal G^*_{\lambda,\beta}$, we readily see that
\begin{align}
\left|\mathcal G^*_{\lambda,\beta}(f)(x)\right|^2=&\iint_{\mathbb R^{n+1}_+}\left(\frac{t}{t+|x-y|}\right)^{\lambda n}\Big(A_\beta(f)(y,t)\Big)^2\frac{dydt}{t^{n+1}}\notag\\
=&\int_0^\infty\int_{|x-y|<t}\left(\frac{t}{t+|x-y|}\right)^{\lambda n}\Big(A_\beta(f)(y,t)\Big)^2\frac{dydt}{t^{n+1}}\notag\\
&+\sum_{j=1}^\infty\int_0^\infty\int_{2^{j-1}t\le|x-y|<2^jt}\left(\frac{t}{t+|x-y|}\right)^{\lambda n}\Big(A_\beta(f)(y,t)\Big)^2\frac{dydt}{t^{n+1}}\notag\\
\le&\, C\bigg[\mathcal S_\beta(f)(x)^2+\sum_{j=1}^\infty 2^{-j\lambda n}\mathcal S_{\beta,2^j}(f)(x)^2\bigg].
\end{align}
Let $f\in \dot K^{\alpha,p}_q(w_1,w_2)$. We decompose $f(x)=f_1(x)+f_2(x)+f_3(x)$ as in Theorem 1.1, then we have
\begin{equation*}
\begin{split}
\big\|\mathcal G^*_{\lambda,\beta}(f)\big\|^p_{\dot K^{\alpha,p}_q(w_1,w_2)}
&\le C\sum_{i=1}^3\sum_{k\in\mathbb Z}\big[w_1(B_k)\big]^{{\alpha p}/n}\big\|\mathcal G^*_{\lambda,\beta}(f_i)\chi_k\big\|_{L^q_{w_2}}^p\\
&=J_1+J_2+J_3.
\end{split}
\end{equation*}
Note that $\lambda>\max\{q_2,3\}\ge\max\{q_2,{2q_2}/q\}$ when $q_2\le q$. Since $w_2\in A_{q_2}$ and $1\le q_2\le q$, then $w_2\in A_q$. Applying Propositions 4.1--4.3, Theorem A and the above inequality (4.7), we obtain
\begin{align}
\big\|\mathcal G^*_{\lambda,\beta}(f_1)\big\|_{L^q_{w_2}}&\le C\bigg(\big\|\mathcal S_\beta(f_1)\big\|_{L^q_{w_2}}+\sum_{j=1}^\infty2^{-j\lambda n/2}\big\|\mathcal S_{\beta,2^j}(f_1)\big\|_{L^q_{w_2}}\bigg)\notag\\
&\le C\big\|f_1\big\|_{L^q_{w_2}}\bigg(1+\sum_{j=1}^\infty 2^{-j\lambda n/2}\big[2^{{jnq_2}/2}+2^{{jnq_2}/q}\big]\bigg)\notag\\
&\le C\big\|f_1\big\|_{L^q_{w_2}}.
\end{align}
From the above estimate (4.8) and Lemma 2.1, it follows that
\begin{equation*}
\begin{split}
J_1&\le C\sum_{k\in\mathbb Z}\big[w_1(B_k)\big]^{{\alpha p}/n}\big\|\mathcal G^*_{\lambda,\beta}(f_1)\big\|_{L^q_{w_2}}^p\\
&\le C\sum_{k\in\mathbb Z}\big[w_1(B_k)\big]^{{\alpha p}/n}\big\|f_1\big\|_{L^q_{w_2}}^p\\
&\le C\sum_{k\in\mathbb Z}\big[w_1(B_k)\big]^{{\alpha p}/n}\big\|f\chi_k\big\|_{L^q_{w_2}}^p\\
&\le C\big\|f\big\|^p_{\dot K^{\alpha,p}_q(w_1,w_2)}.
\end{split}
\end{equation*}
For any $j\in\mathbb Z_+$, $x\in C_k$, $(y,t)\in\Gamma_{2^j}(x)$ and $z\in\{2^{\ell-1}<|z|\le2^{\ell}\}\cap B(y,t)$ with $\ell\le k-2$, then by a simple calculation, we can easily deduce
\begin{equation*}
t+2^jt\ge |x-y|+|y-z|\ge|x-z|\ge|x|-|z|\ge \frac{|x|}{2}.
\end{equation*}
Thus, by the previous inequality (3.1) and Minkowski's inequality, we get
\begin{align}
\big|\mathcal S_{\beta,2^j}(f\chi_{\ell})(x)\big|&=\left(\iint_{\Gamma_{2^j}(x)}\Big(\sup_{\varphi\in{\mathcal C}_\beta}\big|(f\chi_{\ell})*\varphi_t(y)\big|\Big)^2\frac{dydt}{t^{n+1}}\right)^{1/2}\notag\\
&\le C\left(\int_{\frac{|x|}{2^{j+2}}}^\infty\int_{|x-y|<2^jt}\bigg|t^{-n}
\int_{2^{\ell-1}<|z|\le2^{\ell}}|f(z)|\,dz\bigg|^2\frac{dydt}{t^{n+1}}\right)^{1/2}\notag\\
&\le C\left(\int_{2^{\ell-1}<|z|\le2^{\ell}}|f(z)|\,dz\right)
\left(\int_{\frac{|x|}{2^{j+2}}}^\infty 2^{jn}\frac{dt}{t^{2n+1}}\right)^{1/2}\notag\\
&\le C\cdot2^{{3jn}/2}\frac{1}{|x|^n}\bigg(\int_{2^{\ell-1}<|z|\le2^{\ell}}|f(z)|\,dz\bigg).
\end{align}
Moreover, by using Minkowski's inequality, (3.3) and (4.9), we obtain
\begin{equation*}
\begin{split}
\big\|\mathcal S_{\beta,2^j}(f_2)\chi_k\big\|_{L^q_{w_2}}&\le\sum_{\ell=-\infty}^{k-2}\big\|\mathcal S_{\beta,2^j}(f\chi_{\ell})\chi_k\big\|_{L^q_{w_2}}\\
&\le C\cdot2^{{3jn}/2}\sum_{\ell=-\infty}^{k-2}\bigg(\int_{2^{\ell-1}<|z|\le2^{\ell}}|f(z)|\,dz\bigg)
\bigg(\int_{2^{k-1}<|x|\le 2^k}\frac{w_2(x)}{|x|^{nq}}dx\bigg)^{1/q}\\
&\le C\cdot2^{{3jn}/2}\sum_{\ell=-\infty}^{k-2}\frac{|B_{\ell}|}{|B_k|}\cdot
\frac{[w_2(B_k)]^{1/q}}{[w_2(B_{\ell})]^{1/q}}\big\|f\chi_{\ell}\big\|_{L^q_{w_2}}.
\end{split}
\end{equation*}
Consequently
\begin{equation*}
\begin{split}
J_2&\le C\sum_{k\in\mathbb Z}\big[w_1(B_k)\big]^{{\alpha p}/n}\bigg(\big\|\mathcal S_{\beta}(f_2)\chi_k\big\|_{L^q_{w_2}}+\sum_{j=1}^\infty 2^{-j\lambda n/2}\big\|\mathcal S_{\beta,2^j}(f_2)\chi_k\big\|_{L^q_{w_2}}\bigg)^p\\
&\le C\sum_{k\in\mathbb Z}\big[w_1(B_k)\big]^{{\alpha p}/n}\bigg(\sum_{\ell=-\infty}^{k-2}\frac{|B_{\ell}|}{|B_k|}\cdot
\frac{[w_2(B_k)]^{1/q}}{[w_2(B_{\ell})]^{1/q}}\big\|f\chi_{\ell}\big\|_{L^q_{w_2}}\bigg)^p
\times\bigg(1+\sum_{j=1}^\infty 2^{-j\lambda n/2}\cdot2^{{3jn}/2}\bigg)^p\\
&\le C\sum_{k\in\mathbb Z}\big[w_1(B_k)\big]^{{\alpha p}/n}\bigg(\sum_{\ell=-\infty}^{k-2}\frac{|B_{\ell}|}{|B_k|}\cdot
\frac{[w_2(B_k)]^{1/q}}{[w_2(B_{\ell})]^{1/q}}\big\|f\chi_{\ell}\big\|_{L^q_{w_2}}\bigg)^p,
\end{split}
\end{equation*}
where the last inequality holds under our assumption $\lambda>3$. On the other hand, for any $j\in\mathbb Z_+$, $x\in C_k$, $(y,t)\in\Gamma_{2^j}(x)$ and $z\in\{2^{\ell-1}<|z|\le2^{\ell}\}\cap B(y,t)$ with $\ell\ge k+2$, it is easy to verify that
\begin{equation*}
t+2^jt\ge |x-y|+|y-z|\ge|x-z|\ge|z|-|x|\ge \frac{|z|}{2}.
\end{equation*}
Then it follows from the inequality (3.1) and Minkowski's inequality that
\begin{align}
\big|\mathcal S_{\beta,2^j}(f\chi_{\ell})(x)\big|&\le C\left(\int_{\frac{|z|}{2^{j+2}}}^\infty\int_{|x-y|<2^jt}\bigg|t^{-n}
\int_{2^{\ell-1}<|z|\le2^{\ell}}|f(z)|\,dz\bigg|^2\frac{dydt}{t^{n+1}}\right)^{1/2}\notag\\
&\le C\bigg(\int_{2^{\ell-1}<|z|\le2^{\ell}}|f(z)|\,dz\bigg)
\left(\int_{\frac{|z|}{2^{j+2}}}^\infty 2^{jn}\frac{dt}{t^{2n+1}}\right)^{1/2}\notag\\
&\le C\cdot2^{{3jn}/2}\bigg(\int_{2^{\ell-1}<|z|\le2^{\ell}}\frac{|f(z)|}{|z|^n}dz\bigg).
\end{align}
Furthermore, by Minkowski's inequality, (3.3) and (4.10), we have
\begin{equation*}
\begin{split}
\big\|\mathcal S_{\beta,2^j}(f_3)\chi_k\big\|_{L^q_{w_2}}&\le\sum_{\ell=k+2}^{\infty}\big\|\mathcal S_{\beta,2^j}(f\chi_{\ell})\chi_k\big\|_{L^q_{w_2}}\\
&\le C\cdot2^{{3jn}/2}\sum_{\ell=k+2}^{\infty}
\frac{[w_2(B_k)]^{1/q}}{[w_2(B_{\ell})]^{1/q}}\big\|f\chi_{\ell}\big\|_{L^q_{w_2}}.
\end{split}
\end{equation*}
Therefore
\begin{equation*}
\begin{split}
J_3&\le C\sum_{k\in\mathbb Z}\big[w_1(B_k)\big]^{{\alpha p}/n}\bigg(\big\|\mathcal S_{\beta}(f_3)\chi_k\big\|_{L^q_{w_2}}+\sum_{j=1}^\infty 2^{-j\lambda n/2}\big\|\mathcal S_{\beta,2^j}(f_3)\chi_k\big\|_{L^q_{w_2}}\bigg)^p\\
&\le C\sum_{k\in\mathbb Z}\big[w_1(B_k)\big]^{{\alpha p}/n}\bigg(\sum_{\ell=k+2}^{\infty}
\frac{[w_2(B_k)]^{1/q}}{[w_2(B_{\ell})]^{1/q}}\big\|f\chi_{\ell}\big\|_{L^q_{w_2}}\bigg)^p
\times\bigg(1+\sum_{j=1}^\infty 2^{-j\lambda n/2}\cdot2^{{3jn}/2}\bigg)^p\\
&\le C\sum_{k\in\mathbb Z}\big[w_1(B_k)\big]^{{\alpha p}/n}\bigg(\sum_{\ell=k+2}^{\infty}
\frac{[w_2(B_k)]^{1/q}}{[w_2(B_{\ell})]^{1/q}}\big\|f\chi_{\ell}\big\|_{L^q_{w_2}}\bigg)^p,
\end{split}
\end{equation*}
where the last inequality also holds since $\lambda>3$. Following along the same lines as in Theorem 1.1, we can also show that
\begin{equation*}
J_2\le C\big\|f\big\|^p_{\dot K^{\alpha,p}_q(w_1,w_2)}
\end{equation*}
and
\begin{equation*}
J_3\le C\big\|f\big\|^p_{\dot K^{\alpha,p}_q(w_1,w_2)}.
\end{equation*}
Summing up the above estimates for $J_1$, $J_2$ and $J_3$, we complete the proof of Theorem 1.3.
\end{proof}

\begin{proof}[Proof of Theorem 1.4]
Let $f\in \dot K^{\alpha,p}_q(w_1,w_2)$. We set $f(x)=f_1(x)+f_2(x)+f_3(x)$ as in Theorem 1.2, then for any given $\sigma>0$, we can write
\begin{equation*}
\begin{split}
&\sigma^p\cdot\sum_{k\in\mathbb Z}\big[w_1(B_k)\big]^{{\alpha p}/n}w_2\Big(\Big\{x\in C_k:\big|\mathcal G^*_{\lambda,\beta}(f)(x)\big|>\sigma\Big\}\Big)^{p/q}\\
\le\,&\sum_{i=1}^3\sigma^p\cdot\sum_{k\in\mathbb Z}\big[w_1(B_k)\big]^{{\alpha p}/n}
w_2\Big(\Big\{x\in C_k:\big|\mathcal G^*_{\lambda,\beta}(f_i)(x)\big|>\sigma/3\Big\}\Big)^{p/q}\\
=\,&J'_1+J'_2+J'_3.
\end{split}
\end{equation*}
Since $\lambda>\max\{q_2,3\}\ge\max\{q_2,{2q_2}/q\}$ when $q_2\le q$. Applying Chebyshev's inequality, Lemma 2.1 and (4.8), we obtain
\begin{equation*}
\begin{split}
J'_1&\le\sigma^p\cdot\sum_{k\in\mathbb Z}\big[w_1(B_k)\big]^{{\alpha p}/n}\left(\frac{3^q}{\sigma^q}\big\|\mathcal G^*_{\lambda,\beta}(f_1)\big\|^q_{L^q_{w_2}}\right)^{p/q}\\
&\le C\sum_{k\in\mathbb Z}\big[w_1(B_k)\big]^{{\alpha p}/n}\big\|f_1\big\|_{L^q_{w_2}}^p\\
&\le C\sum_{k\in\mathbb Z}\big[w_1(B_k)\big]^{{\alpha p}/n}\big\|f\chi_k\big\|_{L^q_{w_2}}^p\\
&\le C\big\|f\big\|^p_{\dot K^{\alpha,p}_q(w_1,w_2)}.
\end{split}
\end{equation*}
For the term $J'_2$, when $x\in C_k$, then it follows from (4.7), (4.9), (3.3) and the fact $\lambda>3$ that
\begin{equation*}
\begin{split}
\big|\mathcal G^*_{\lambda,\beta}(f_2)(x)\big|&\le \sum_{\ell=-\infty}^{k-2}\big|\mathcal G^*_{\lambda,\beta}(f\chi_{\ell})(x)\big|\\
&\le C\sum_{\ell=-\infty}^{k-2}\bigg(\big|\mathcal S_{\beta}(f\chi_{\ell})(x)\big|+\sum_{j=1}^\infty2^{-j\lambda n/2}\big|\mathcal S_{\beta,2^j}(f\chi_{\ell})(x)\big|\bigg)\\
&\le C\bigg(\sum_{\ell=-\infty}^{k-2}\frac{1}{|x|^n}\int_{2^{\ell-1}<|z|\le2^{\ell}}|f(z)|\,dz\bigg)
\bigg(1+\sum_{j=1}^\infty2^{-j\lambda n/2}\cdot2^{{3jn}/2}\bigg)\\
&\le C\sum_{\ell=-\infty}^{k-2}\frac{1}{|x|^n}\bigg(\int_{2^{\ell-1}<|z|\le2^{\ell}}|f(z)|\,dz\bigg)\\
&\le C \sum_{\ell=-\infty}^{k-2}\frac{|B_{\ell}|}{|B_k|}
\big[w_2(B_{\ell})\big]^{-1/q}\big\|f\chi_{\ell}\big\|_{L^q_{w_2}}.
\end{split}
\end{equation*}
For the last term $J'_3$, when $x\in C_k$, by using (4.7), (4.10), (3.3) and the fact that $\lambda>3$, we get
\begin{equation*}
\begin{split}
\big|\mathcal G^*_{\lambda,\beta}(f_3)(x)\big|&\le \sum_{\ell=k+2}^{\infty}\big|\mathcal G^*_{\lambda,\beta}(f\chi_{\ell})(x)\big|\\
&\le C\sum_{\ell=k+2}^{\infty}\bigg(\big|\mathcal S_{\beta}(f\chi_{\ell})(x)\big|+\sum_{j=1}^\infty2^{-j\lambda n/2}\big|\mathcal S_{\beta,2^j}(f\chi_{\ell})(x)\big|\bigg)\\
&\le C\bigg(\sum_{\ell=k+2}^{\infty}\int_{2^{\ell-1}<|z|\le2^{\ell}}\frac{|f(z)|}{|z|^n}dz\bigg)
\bigg(1+\sum_{j=1}^\infty2^{-j\lambda n/2}\cdot2^{{3jn}/2}\bigg)\\
&\le C\sum_{\ell=k+2}^{\infty}\int_{2^{\ell-1}<|z|\le2^{\ell}}\frac{|f(z)|}{|z|^n}dz\\
&\le C\sum_{\ell=k+2}^{\infty}\big[w_2(B_{\ell})\big]^{-1/q}\big\|f\chi_{\ell}\big\|_{L^q_{w_2}}.
\end{split}
\end{equation*}
The rest of the proof is exactly the same as that of Theorem 1.2, and we finally obtain
\begin{equation*}
J'_2\le C\big\|f\big\|^p_{\dot K^{\alpha,p}_q(w_1,w_2)}
\end{equation*}
and
\begin{equation*}
J'_3\le C\big\|f\big\|^p_{\dot K^{\alpha,p}_q(w_1,w_2)}.
\end{equation*}
Combining the above estimates for $J'_1$, $J'_2$ and $J'_3$, and then taking the supremum over all $\sigma>0$, we conclude the proof of Theorem 1.4.
\end{proof}


\begin{thebibliography}{99}

\bibitem{chang} S. Y. A. Chang, J. M. Wilson and T. H. Wolff, Some weighted norm inequalities concerning the Schr\"odinger operators, Comment. Math. Helv,  \textbf{60}(1985), 217--246.
\bibitem{chanillo} S. Chanillo and R. L. Wheeden, Some weighted norm inequalities for the area integral, Indiana Univ. Math. J, \textbf{36}(1987), 277--294.
\bibitem{garcia2} J. Garcia-Cuerva and J. Rubio de Francia, Weighted Norm Inequalities and Related Topics, North-Holland, Amsterdam, 1985.
\bibitem{gundy} R. F. Gundy and R. L. Wheeden, Weighted integral inequalities for nontangential maximal function,
    Lusin area integral, and Walsh-Paley series, Studia Math, \textbf{49}(1974), 107--124.
\bibitem{huang} J. Z. Huang and Y. Liu, Some characterizations of weighted Hardy spaces, J. Math. Anal. Appl, \textbf{363}(2010), 121--127.
\bibitem{komori} Y. Komori and K. Matsuoka, Boundedness of several operators on weighted Herz spaces, J. Funct. Spaces Appl, \textbf{7}(2009), 1--12.
\bibitem{lerner} A. K. Lerner, Sharp weighted norm inequalities for Littlewood-Paley operators and singular integrals, Adv. Math., \textbf{226}(2011), 3912--3926.
\bibitem{lu1} S. Z. Lu and D. C. Yang, The decomposition of the weighted Herz spaces and its applications, Sci. China (Ser. A), \textbf{38}(1995), 147--158.
\bibitem{lu4} S. Z. Lu and D. C. Yang, Hardy-Littlewood-Sobolev theorems of fractional integration on Herz-type spaces and its applications, Canad. J. Math, \textbf{48}(1996), 363--380.
\bibitem{lu2} S. Z. Lu, D. C. Yang and G. E. Hu, Herz Type Spaces and Their Applications, Science Press, Beijing, 2008.
\bibitem{lu3} S. Z. Lu, K. Yabuta and D. C. Yang, Boundedness of some sublinear operators in weighted Herz-type spaces, Kodai Math. J, \textbf{23}(2000), 391--410.
\bibitem{muckenhoupt1} B. Muckenhoupt, Weighted norm inequalities for the Hardy maximal function, Trans. Amer. Math.
    Soc, \textbf{165}(1972), 207--226.
\bibitem{muckenhoupt2} B. Muckenhoupt and R. L. Wheeden, Norm inequalities for the Littlewood-Paley function $g^*_\lambda$, Trans. Amer. Math. Soc, \textbf{191}(1974), 95--111.
\bibitem{stein2} E. M. Stein, On some functions of Littlewood-Paley and Zygmund, Bull. Amer. Math.
    Soc, \textbf{67}(1961), 99--101.
\bibitem{stein} E. M. Stein, Singular Integrals and Differentiability Properties of Functions, Princeton Univ. Press, Princeton, New Jersey, 1970.
\bibitem{tang} L. Tang and D. C. Yang, Boundedness of vector-valued operators on weighted Herz spaces, Approx. Theory Appl, \textbf{16}(2000), 58--70.
\bibitem{torchinsky} A. Torchinsky, Real-Variable Methods in Harmonic Analysis, Academic Press, New York, 1986.
\bibitem{wang1} H. Wang, Intrinsic square functions on the weighted Morrey spaces, J. Math. Anal. Appl, \textbf{396}(2012), 302--314.
\bibitem{wang2} H. Wang, Boundedness of intrinsic square functions on the weighted weak Hardy spaces, Integr. Equ. Oper. Theory, to appear.
\bibitem{wang3} H. Wang and H. P. Liu, The intrinsic square function characterizations of weighted Hardy spaces, Illinois J. Math, to appear.
\bibitem{wang4} H. Wang and H. P. Liu, Weak type estimates of intrinsic square functions on the weighted Hardy spaces, Arch. Math., \textbf{97}(2011), 49--59.
\bibitem{wilson1} M. Wilson, The intrinsic square function, Rev. Mat. Iberoamericana, \textbf{23}(2007), 771--791.
\bibitem{wilson2} M. Wilson, Weighted Littlewood-Paley Theory and Exponential-Square Integrability, Lecture Notes in Math, Vol 1924, Springer-Verlag, 2007.

\end{thebibliography}
\end{document}